\documentclass{amsart}

\title[Elliptic curves with with prescribed mod~$3$ representation]
{Constructing families of elliptic curves with prescribed mod~$3$ representation via Hessian and Cayleyan curves}
\thanks{After submitting the first version of this paper on the arXiv, the author was informed that the main observation of this paper had already been made by Tom Fisher, \emph{The Hessian of a genus one curve}, Proc. Lond. Math. Soc. (3) \textbf{104} (2012), 613-648.  
}
\author[M. Kuwata]{Masato Kuwata}
\address{Faculty of Economics, Chuo University, 742-1 Higashinakano, 
Hachioji-shi, Tokyo 192-0393, Japan}
\email{kuwata@tamacc.chuo-u.ac.jp}

\date{}

\usepackage{mathrsfs}
\usepackage[dvips]{graphicx}
\usepackage[all]{xy}

\newtheorem{theorem}{Theorem}[section]
\newtheorem{lemma}[theorem]{Lemma}
\newtheorem{prop}[theorem]{Proposition}
\newtheorem{cor}[theorem]{Corollary}

\theoremstyle{definition}
\newtheorem{definition}[theorem]{Definition}

\theoremstyle{remark}
\newtheorem{remark}[theorem]{Remark}

\renewcommand{\P}{\mathbf{P}}
\def\dP^#1{(\P^{#1})^{*}}
\newcommand{\E}{\mathcal{E}}
\newcommand{\F}{\mathcal{F}}

\newcommand{\Z}{\mathbf{Z}}
\newcommand{\He}{\operatorname{{\it He}}}
\newcommand{\Cay}{\operatorname{{\it Ca}}}

\newcommand{\vect}[1]{\boldsymbol{#1}}
\newcommand{\Gal}{\operatorname{Gal}}
\newcommand{\Aut}{\operatorname{Aut}}
\newcommand{\rank}{\operatorname{rank}}
\newcommand{\Dir}{\nabla}
\newcommand{\<}{\langle}\renewcommand{\>}{\rangle}
\newcommand{\tr}{{}^{t}}
\numberwithin{equation}{section}

\setlength{\arraycolsep}{3pt}

\begin{document}
\maketitle
\section{Introduction}

Let $E_{0}$ be an elliptic curve defined over a number field~$k$. 
The subgroup of $3$-torsion points $E_{0}[3]$ of~$E_{0}(\bar k)$ is a Galois module that gives rise to a representation 
\[
\bar\rho_{E_{0},3}:\Gal(\bar k/k)\to \Aut(E_{0}[3])\cong GL_{2}(\mathbb{F}_{3}).
\]
The collection of elliptic curves over~$k$ having the same mod $3$ representation as a given elliptic curve $E_{0}$ forms an infinite family.  In this paper we give an explicit construction of this family using the notion Hessian and Cayleyan curves in classical geometry. 

Suppose $\phi:E_{0}[3]\to E[3]$ is an isomorphism as Galois modules.  Then, either $\phi$ commutes with the Weil pairings $e_{E_{0},3}$ and 
$e_{E,3}$, or we have 
\[
e_{E,3}\bigl(\phi(P),\phi(Q)\bigr)=e_{E_{0},3}(P,Q)^{-1}
\]
for all $P, Q\in E_{0}[3]$.  In the former case, we call $\phi$ a symplectic isomorphism, or an isometry.  In the latter case, we call $\phi$ an anti-symplectic isomorphism, or an anti-isometry.

Rubin and Silverberg \cite{Rubin-Silverberg} gave an explicit construction of the family of elliptic curves $E$ over $k$ that admits a symplectic isomorphism $E_{0}[3]\to E[3]$.  There is a universal elliptic curve $\E_{t}$ over a twist of noncompact modular curve $Y_{3}$ which is a twist of the Hesse cubic curve
\(
x^{3}+y^{3}+z^{3}=3\lambda xyz.
\)
We give an alternative construction of this family using the Hessian curve of $E_{0}$ (see \S2 for definition).   
The family of elliptic curves $F$ over $k$ that admits a anti-symplectic isomorphism $E_{0}[3]\to F[3]$ is related to the construction of curves of genus~$2$ that admit a morphism of degree~$3$ to~$E_{0}$ (see Frey and Kani \cite{Frey-Kani}).  We show that there is a universal elliptic curve $\F_{t}$ for this family, and we give a construction using the Caylean curves (see \S3) in the dual projective plane.   

Our main results are roughly as follows. 
Choose a model of $E_{0}$ as a plane cubic curve such that the origin $O$ of the group structure of $E_{0}$ is an inflection point.  A Weierstrass model of $E_{0}$ satisfies this condition.  Then, its Hessian curve $\He(E_{0})$ is a cubic curve in the same projective plane and the intersection $E_{0}\cap \He(E_{0})$ is nothing but the group of $3$-torsion points. We will show that the pencil of cubic curves
\[
\E_{t}: E_{0} + t \He(E_{0})
\]
is nothing but the family with symplectic isomorphism $E_{0}[3]\to \E_{t}[3]$, as the nine base points of the pencil form the subgroup $\E_{t}[3]$ for each~$t$ 
(Theorem~\ref{thm:4-1}).

It is classically known that the Hessian curve $\He(E)$ admits a fixed point free involution~$\iota$ (see \cite{Artebani-Dolgachev}\cite{Dolgachev}).  The line joining the points $P$ and $\iota(P)$, denoted by $\overline{P\iota(P)}$, gives a point in the dual projective plane $\dP^{2}$.  The locus of such lines $\overline{P\iota(P)}$ for all $P\in E_{0}$ is a cubic curve in $\dP^{2}$ classically known as Cayleyan curve and it is denoted by $\Cay(E_{0})$. It is easy to see that $\Cay(E_{0})$ is isomorphic to the quotient $\He(E_{0})/\<\iota\>$, and we can prove that the map associating $P\in E_{0}[3]$ to its inflection tangent $T_{P}\in \Cay(E_{0})[3]$ is an anti-symplectic isomorphism. Since $\Cay(E_{0})$ also has a fixed point free involution, we may expect that it is the Hessian of a cubic curve in $\dP^{2}$, and it turns out this is the case.  There is a cubic curve $F_{0}$ in $\dP^{2}$ whose Hessian is $\Cay(E_{0})$.  The pencil of cubic curves
\[
\F_{t}: F_{0} + t \Cay(E_{0})
\]
is then the family with anti-symplectic isomorphism $E_{0}[3]\to \F_{t}[3]$.

If $E_{0}$ is given by the Weierstrass equation 
\[
E_{0}:y^{2}z = x^{3} + Axz^{2} + B z^{3},
\]
then the equations of $\He(E_{0})$, $\Cay(E_{0})$, and $F_{0}$ are given by
\[\renewcommand{\arraystretch}{1.5}
\begin{array}{ccl}
\He(E_{0})&:&3Ax^2z+9Bxz^2+3xy^2-A^2z^3=0,
\\
\Cay(E_{0})&:&A\xi^3 + 9B\xi\eta^2 + 3\xi\zeta^2 - 6A\eta^2\zeta=0,
\\
F_{0} &:&AB\xi^3 - 2A^2\xi^2\zeta - (4A^3+27B^2)\xi\eta^2 - 9B\xi\zeta^2 + 2A\zeta^3=0,
\end{array}\]
where $(\xi:\eta:\zeta)$ is the dual coordinate of $\dP^{2}$.

In \S6 we give some applications.  From our description of $\E_{t}$ and $\F_{t}$, it is clear that the elliptic surfaces associated to these families are rational elliptic surfaces over~$k$.  As a consequence, we are able to apply Salgado's theorem \cite{Salgado} to our family (Theorem~\ref{thm:salgado}).  

Let $F$ be a nonsingular member of $\F_{t}$.  Since we have an anti-symplectic isomorphism $\psi: E_{0}[3]\to F[3]$, Frey and Kani \cite{Frey-Kani} show that there exists a curve $C$ of genus~$2$ that admits two morphism $C\to E_{0}$ and $C\to F$ of degree~$3$. Indeed, the quotient of $E_{0}\times F$ by the graph of $\psi$ is a principally polarized abelian surface that is the Jacobian of a curve $C$ of genus~$2$.
For example, if we take $\Cay(E_{0})$ as $F$, then it turns out that this is the degenerate case where $C\to \Cay(E_{0})$ is ramified at one place with ramification index~$3$.  We will give explicit formulas for this case (Proposition~\ref{prop:genus-2}).


\section{Hessian of a plane cubic}

Let $C$ be a plane curve defined by a homogenous equation $F(x,y,z)=0$.   In this section we summarize some facts on polarity, with a special emphasis on our particular case of cubic curves.  In this section and the next, the base filed is taken as an algebraic closure of~$k$.  For more general treatment,  see Dolgachev \cite[Ch.~1 and Ch.~3]{Dolgachev}.

For a nonzero vector $\vect{a}=\tr(a_{0},a_{1},a_{2})$, we define the differential operator$\Dir_{\vect{a}}$ by
\[
\Dir_{\vect{a}} =a_{0}\frac{\partial}{\partial x}
+a_{1}\frac{\partial}{\partial y}
+a_{2}\frac{\partial}{\partial z}.
\]
Here, $\Dir_{\vect{a}}F(x,y,z)$ stands for the directional derivative of $F(x,y,z)$ along the direction vector~$\vect{a}$. 
The \emph{first polar} curve of $C$ is then defined by the equation 
\(
\Dir_{\vect{a}}F(x,y,z)=0
\).
It depends only on the point $a=(a_{0}:a_{1}:a_{2})\in \P^{2}$, but not the vector $\vect{a}$ itself. Thus, we denote the first polar curve by $P_{a}(C)$:
\[
P_{a}(C) : \Dir_{\vect{a}}F(x,y,z)=0.
\]
When $C$ is a cubic, $P_{a}(C)$ is a conic.

\begin{figure}
\includegraphics[scale=0.36]{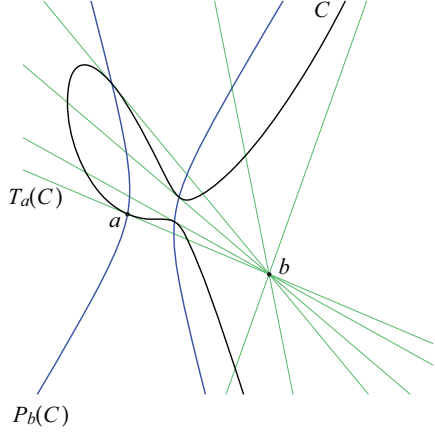}
\caption{First polar curve}\label{fig:1}
\end{figure}

Also, the \emph{second polar} curve of $C$ is defined by $\Dir_{\vect{a}}\Dir_{\vect{a}}F(x,y,z)=0$.  The composition of differential operators $\Dir_{\vect{a}}\circ \Dir_{\vect{a}}$ is sometimes denote by $\Dir_{\vect{a}^{2}}$, and thus the second polar is denoted by $P_{a^{2}}(C)$:
\[
P_{a^{2}}(C) : \Dir_{\vect{a}^{2}}F(x,y,z)= 0.
\]
When $C$ is a cubic, $P_{a^{2}}(C)$ is a line.

In general we define the differential operator $\Dir_{\vect{a}^{k}}$ inductively by  
\[
\Dir_{\vect{a}^{k}}=\Dir_{\vect{a}}\circ \Dir_{\vect{a}^{k-1}}, \quad k\ge 2,
\]
and for a plane curve $C$ of any degree, the $k$-th polar $P_{a^{k}}(C)$ is defined by 
\[
P_{a^{k}}(C) : \Dir_{\vect{a}^{k}}F(x,y,z)=0.  
\]
With this notation the Taylor expansion formula for a general analytic function $F$ can be written in the following form:
\begin{align*}
F(\vect{x}+\vect{a}) 
&=\sum_{k=0}^{\infty} \frac{1}{k!}\Dir_{\vect{a}^{k}}F(\vect{x})
\\
&= F(\vect{x}) + \Dir_{\vect{a}}F(\vect{x})
+\frac{1}{2!}\Dir_{\vect{a}^{2}}F(\vect{x}) 
+\frac{1}{3!}\Dir_{\vect{a}^{3}}F(\vect{x}) 
+\cdots,
\end{align*}
where $\vect{x}=\tr(x,y,z)$.  
With matrix notation we can write
\[
\Dir_{\vect{a}}F(\vect{x}) = F'(\vect{x})\vect{a},
\quad
\Dir_{\vect{b}}\Dir_{\vect{a}}F(\vect{x}) = \tr\vect{b}F''(\vect{x})\vect{a}.
\]
where
\[
F'(\vect{x}) =
\begin{pmatrix} F_{x}(\vect{x})  & F_{y}(\vect{x})  & F_{z}(\vect{x}) 
\end{pmatrix},
\quad
F''(\vect{x}) 
=\begin{pmatrix} 
F_{xx}(\vect{x})  & F_{xy}(\vect{x})  & F_{xz}(\vect{x})  \\
F_{yx}(\vect{x})  & F_{yy}(\vect{x})  & F_{yz}(\vect{x})  \\
F_{zx}(\vect{x})  & F_{zy}(\vect{x})  & F_{zz}(\vect{x}) 
\end{pmatrix}.
\]
Here $F_{x}(\vect{x})$ and $F_{xx}(\vect{x})$, for example, mean partial derivative $\frac{\partial F}{\partial x}(x,y,z)$ and the second partial derivative $\frac{\partial^{2}F}{\partial x^{2}}(x,y,z)$.
The $3 \times 3$ matrix $F''(\vect{x})$ is called the \emph{Hessian matrix}, and is sometimes denoted by $\He(F)(\vect{x})$. 

\begin{definition}
The \emph{Hessian} $\He(C)$ of $C$ is defined by
\[
\He(C):
\det F''(\vect{x})=\left|
\begin{matrix} 
F_{xx}(\vect{x}) & F_{xy}(\vect{x}) & F_{xz}(\vect{x}) \\
F_{yx}(\vect{x}) & F_{yy}(\vect{x}) & F_{yz}(\vect{x}) \\
F_{zx}(\vect{x}) & F_{zy}(\vect{x}) & F_{zz}(\vect{x})
\end{matrix}
\right|
=0.
\]
\end{definition}

\begin{remark}
The determinant $\det F''(\vect{x})$ can be identically zero.  For example, if $C$ is defined by $F(x,y,z)=xy^{2}+zy^{2}$, then $\det F''(\vect{x})=0$ identically.  In this case we have $\He(C)=\P^{2}$.  
\end{remark}

To see the meaning of $P_{a^{k}}(C)$, let $a=(a_{0}:a_{1}:a_{2})$ and $b=(b_{0}:b_{1}:b_{2})$ be  two points in $\P^{2}$, and let $\ell=\overline{ab}$ be the line joining the two points.  $\ell$ is the image of the map $\lambda:\P^{1}\to\P^{2}$
\[
\lambda:(s:t)\mapsto sa + tb =
(sa_{0}+tb_{0}:sa_{1}+tb_{1}:sa_{2}+tb_{2}).
\]

If the degree of $F$ is $n$, then the Taylor expansion formula around the point $(s:t)=(1:0)$ gives a homogenous polynomial of degree~$n$ in $s$ and~$t$:
\begin{multline}\label{eq:Taylor-n}
F(s\vect{a}+t\vect{b})=
\\
F(\vect{a}) s^{n} 
+ \Dir_{\vect{b}}F(\vect{a}) s^{n-1}t
+\frac{1}{2!}\Dir_{\vect{b}^{2}}F(\vect{a}) s^{n-2}t^{2}
+\frac{1}{3!}\Dir_{\vect{b}^{3}}F(\vect{a}) s^{n-3}t^{3} + \cdots.
\end{multline}

Before going further, we need a few lemmas.

\begin{lemma}\label{lem:conic}
Let $Q$ be a plane conic defined by $\tr\vect{x}M\vect{x}=0$, where $M$ is a symmetric matrix.  Let $a=(a_{0}:a_{1}:a_{2})$ be a point on~$Q$, and $\vect{a}=\tr(a_{0},a_{1},a_{2})$ is the corresponding vector.
\begin{enumerate}
\item $a$ is a singular point if and only if $M\vect{a}=\vect{0}$.
\item $Q$ is degenerate if and only if $\det M=0$.
\item If $a$ is a smooth point, then the tangent line $T_{a}(Q)$ at $a$ is given by the equation $\tr\vect{x}M\vect{a}=0$.
\end{enumerate}
\end{lemma}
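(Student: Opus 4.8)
The plan is to prove each of the three parts by a direct computation in homogeneous coordinates, relying only on Euler's identity for the quadratic form $Q(\vect{x})=\tr\vect{x}M\vect{x}$.

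First I would record the basic derivative identities. Since $M$ is symmetric, one has $\nabla_{\vect{b}}(\tr\vect{x}M\vect{x}) = 2\,\tr\vect{b}M\vect{x} = 2\,\tr\vect{x}M\vect{b}$, so the gradient of $Q$ at a point $\vect{x}$ is $2M\vect{x}$ (written as a column) or $2\,\tr\vect{x}M$ (written as a row). Euler's formula gives $\tr\vect{x}M\vect{x} = \tfrac12 \nabla_{\vect{x}}(\tr\vect{x}M\vect{x}) = \tr\vect{x}M\vect{x}$ trivially, but more usefully $Q(\vect{a}) = \tr\vect{a}M\vect{a}$, and for $a$ on $Q$ this vanishes. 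For part (1), the point $a\in Q$ is singular exactly when all first partials of $Q$ vanish at $\vect{a}$, i.e. $\tr\vect{a}M = \vect{0}$, equivalently (by symmetry of $M$) $M\vect{a}=\vect{0}$; this is immediate from the gradient computation above. For part (3), if $a$ is a smooth point then the tangent line to $Q$ at $a$ is the projective line cut out by the linear form $\nabla_{\vect{x}}Q$ evaluated at $\vect{x}=\vect{a}$ acting on the running coordinate, that is, the zero locus of $\vect{x}\mapsto \tr\vect{a}M\vect{x} = \tr\vect{x}M\vect{a}$, which is exactly the stated equation; I would note that this linear form is nonzero precisely because $a$ is smooth, so it genuinely defines a line.

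For part (2), I would argue that $Q$ is degenerate (reducible, or a double line) if and only if it has a singular point, and by part (1) this happens if and only if the homogeneous linear system $M\vect{a}=\vect{0}$ has a nontrivial solution, i.e. $\det M = 0$. One direction is clear: if $\det M=0$ pick $\vect{a}\neq\vect{0}$ in the kernel; then $\tr\vect{a}M\vect{a}=0$ so $a\in Q$, and by part (1) it is a singular point, so $Q$ is degenerate. Conversely, if $Q$ is degenerate it is a union of two lines (possibly equal), and their intersection point $a$ satisfies $M\vect{a}=\vect{0}$ by part (1), forcing $\det M=0$; alternatively one diagonalizes the symmetric form over the algebraically closed base field and observes that $Q$ has rank $\le 2$ iff it is degenerate.

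The computations here are entirely routine — everything follows from Euler's identity and the symmetry of $M$ — so there is no serious obstacle. The only point requiring a little care is the precise meaning of ``degenerate'' in part (2): I would spell out that over the algebraically closed base field a plane conic is degenerate if and only if it is singular, so that (2) reduces to (1) together with the linear-algebra fact that a homogeneous square system has a nonzero solution iff its determinant vanishes. With that understood, the lemma follows.
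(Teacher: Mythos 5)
Your proof is correct, but it runs along a slightly different track than the paper's. You take the Jacobian criterion (a point of $Q$ is singular iff all first partials of $\tr\vect{x}M\vect{x}$ vanish there) and the gradient description of the tangent line as known facts, and then everything reduces to the computation $\nabla_{\vect{b}}(\tr\vect{x}M\vect{x})=2\,\tr\vect{b}M\vect{x}$, so that the gradient at $\vect{a}$ is $2M\vect{a}$. The paper instead works entirely with intersection multiplicities of lines through $a$: it expands $\tr(s\vect{a}+t\vect{b})M(s\vect{a}+t\vect{b})=2(\tr\vect{b}M\vect{a})st+(\tr\vect{b}M\vect{b})t^{2}$ and reads off that $a$ is singular iff $\tr\vect{b}M\vect{a}=0$ for every $b$ (i.e.\ $M\vect{a}=\vect{0}$), and that $b$ lies on the tangent line iff $\tr\vect{b}M\vect{a}=0$, which yields (1) and (3) without invoking the Jacobian criterion or the gradient characterization of tangency; this keeps the lemma self-contained and consistent with the Taylor-expansion/polar formalism used throughout Section 2. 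The computational kernel is the same (your $2\,\tr\vect{b}M\vect{a}$ is exactly the coefficient the paper extracts), and your part (2) is argued identically to the paper's (degenerate iff singular iff $\ker M\neq 0$ iff $\det M=0$); your explicit remark that the smoothness of $a$ guarantees the linear form $\tr\vect{x}M\vect{a}$ is nonzero is a small point the paper leaves implicit. Your appeals to the Jacobian criterion and to the factor $2$ being invertible are harmless here since the base field is an algebraic closure of a number field, so there is no gap; the paper's route just buys a proof from first principles in the intersection-theoretic language it sets up for cubics.
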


\begin{proof}
Let $b$ be a point in $\P^{2}$ different from $a$.
The intersection between the line $\overline{ab}$ and $Q$ is given by the solution $(s:t)$ to the equation
\begin{equation}\label{eq:int_ab}
\tr(s\vect{a}+t\vect{b})M(s\vect{a}+t\vect{b})
=2(\tr\vect{b}M\vect{a})st + (\tr\vect{b}M\vect{b})t^{2}=0.
\end{equation}
The point $a$ is a singular point of $Q$ if and only if the multiplicity of intersection $\overline{ab}\cap Q$ at $a$ is greater than~$1$ for any point $b$ in $\P^{2}$.  This is equivalent to say that $\tr\vect{b}M\vect{a}=0$ for any $\vect{b}$. This condition in turn is equivalent to the condition $M\vect{a}=\vect{0}$. 

The curve $Q$ is degenerate if and only if it has a singular point.  This is equivalent to the existence of a nonzero vector $\vect{a}$ satisfying $M\vect{a}=\vect{0}$, which in turn is equivalent to the condition $\det M=0$.

A point $b$ is on the tangent line $T_{a}$ if and only if $\overline{ab}$ intersects with $Q$ at $a$ with multiplicity~$2$. The equation \eqref{eq:int_ab} shows that 
the latter condition is equivalent to the condition that $b$ satisfies the equation $\tr\vect{x}M\vect{a}=0$.  Thus, the equation of the tangent line $T_{a}$ is given by $\tr\vect{x}M\vect{a}=0$.
\end{proof}

\begin{lemma}[Euler's formula]\label{lem:euler}
Let $F(\vect{x})$ be a homogeneous polynomial of degree~$d$, where $\vect{x}=\tr(x_{0},x_{1},\dots,x_{n})$.  Then, we have
\[
d(d-1)\dots(d-k+1)F(\vect{x})
=\Dir_{\vect{x}^{k}}F(\vect{x}).
\]
\end{lemma}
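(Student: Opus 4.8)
The plan is to reduce the general statement to the classical Euler identity $\Dir_{\vect{x}}F(\vect{x}) = d\,F(\vect{x})$ for a homogeneous polynomial of degree~$d$, and then iterate. First I would establish the base case $k=1$: differentiating the homogeneity relation $F(\lambda\vect{x}) = \lambda^{d}F(\vect{x})$ with respect to~$\lambda$ and setting $\lambda=1$ gives $\sum_{i=0}^{n} x_{i}\,\partial F/\partial x_{i} = d\,F$, which is exactly $\Dir_{\vect{x}}F(\vect{x}) = d\,F(\vect{x})$. (Alternatively, one checks this directly on monomials and extends by linearity, since $\Dir_{\vect{x}}$ acting on a monomial of total degree~$d$ multiplies it by~$d$.)

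Next I would run the induction on~$k$. The key observation is that if $F$ is homogeneous of degree~$d$, then each first partial derivative $\partial F/\partial x_{j}$ is homogeneous of degree~$d-1$; hence $\Dir_{\vect{x}^{k-1}}F$, being obtained by applying $\Dir_{\vect{x}}$ a total of $k-1$ times, is a homogeneous polynomial of degree $d-(k-1) = d-k+1$ (or identically zero, in which case both sides vanish and there is nothing to prove). Applying the $k=1$ case to this polynomial gives
\[
\Dir_{\vect{x}^{k}}F(\vect{x}) = \Dir_{\vect{x}}\bigl(\Dir_{\vect{x}^{k-1}}F(\vect{x})\bigr) = (d-k+1)\,\Dir_{\vect{x}^{k-1}}F(\vect{x}),
\]
and then the inductive hypothesis $\Dir_{\vect{x}^{k-1}}F(\vect{x}) = d(d-1)\cdots(d-k+2)\,F(\vect{x})$ yields the claim.

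The only point requiring a little care — and the one I would state explicitly — is that $\Dir_{\vect{x}^{k-1}}$ genuinely produces a homogeneous polynomial of the stated degree, so that the degree-one Euler identity can be applied to it with the correct weight $d-k+1$; this is immediate since partial differentiation lowers degree by exactly one on homogeneous polynomials. There is no real obstacle here: the statement is a formal identity about polynomials, and the induction is routine once the base case is in hand. One subtlety worth a remark: the formula should be read as an identity of polynomials, valid even when $k > d$, in which case the product $d(d-1)\cdots(d-k+1)$ contains the factor $0$ and both sides are identically zero, consistent with the fact that differentiating a degree-$d$ form more than $d$ times annihilates it.
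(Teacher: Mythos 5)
Your argument is correct, but it is organized differently from the paper's. The paper proves the identity in one stroke: differentiate the homogeneity relation $F(\lambda\vect{x})=\lambda^{d}F(\vect{x})$ exactly $k$ times with respect to $\lambda$ and set $\lambda=1$; the chain rule turns the $k$-th $\lambda$-derivative of the left side into $\Dir_{\vect{x}^{k}}F(\lambda\vect{x})$, and the right side produces the falling factorial $d(d-1)\cdots(d-k+1)$ at once. You instead prove only the $k=1$ case this way and then induct, using that each application of the polar operator lowers the degree by one. Both routes rest on the same homogeneity relation; yours trades the single $k$-fold differentiation for a routine induction, at the cost of one bookkeeping subtlety that your write-up glosses over: in the display $\Dir_{\vect{x}^{k}}F(\vect{x})=\Dir_{\vect{x}}\bigl(\Dir_{\vect{x}^{k-1}}F(\vect{x})\bigr)=(d-k+1)\,\Dir_{\vect{x}^{k-1}}F(\vect{x})$, the symbol $\Dir_{\vect{x}^{k-1}}F(\vect{x})$ is used in two senses. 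The polynomial to which you apply the degree-one Euler identity must be $\vect{x}\mapsto\Dir_{\vect{a}^{k-1}}F(\vect{x})$ with the direction $\vect{a}$ held fixed as a parameter (this is the polynomial of degree $d-k+1$ you invoke), and the substitution $\vect{a}=\vect{x}$ is performed only after differentiating; whereas in the inductive hypothesis $\Dir_{\vect{x}^{k-1}}F(\vect{x})=d(d-1)\cdots(d-k+2)F(\vect{x})$ the same symbol denotes the diagonally substituted polynomial, which has degree $d$. If one instead iterated the Euler operator $\sum_{i}x_{i}\partial_{x_{i}}$ on the substituted polynomial, one would get $d^{k}F(\vect{x})$ rather than the falling factorial, so the distinction is not cosmetic. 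With the parameter reading made explicit -- apply Euler's identity for fixed $\vect{a}$, then specialize $\vect{a}=\vect{x}$ in the resulting polynomial identity -- your induction closes correctly, and your closing remark about the case $k>d$ (both sides vanish) is a sensible addition that the paper does not spell out.
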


\begin{proof}
Since $F$ is homogeneous of degree~$d$, we have $F(\lambda\vect{x})=\lambda^{d}F(\vect{x})$.  Take its $k$th derivative with respect to $\lambda$ and put $\lambda=1$.
\end{proof}

\begin{prop}\label{prop:2-1}
Let $C$ be a plane curve of degree~$d$ defined by a homogeneous equation $F(\vect{x})=0$. Suppose that $a$ is a smooth (simple) point of~$C$.
\begin{enumerate}
\item For any point $b$ in $\P^{2}$ different from~$a$, the line $\ell=\overline{ab}$ is tangent to $C$ at $a$ if and only if $a\in C\cap P_{b}(C)$.
{\upshape (}See Figure~\ref{fig:1}.{\upshape)}
\item 
The equation of the tangent line $T_{a}(C)$ at $a$ is given by 
\[
\Dir_{\vect{x}}F(\vect{a})=0, \quad\text{or}\quad 
F'(\vect{a})\vect{x}=0.
\]
\item
$P_{a}(C)$ is tangent to $C$ at~$a$. 
\item
$a$ is an inflection point if and only if $a\in C\cap\He(C)$.\end{enumerate}
\end{prop}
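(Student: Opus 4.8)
The plan is to derive all four parts from the restriction of~$F$ to a line through~$a$, reading off the low-order coefficients of the homogeneous polynomial in \eqref{eq:Taylor-n}, together with Euler's formula (Lemma~\ref{lem:euler}) and the conic lemma (Lemma~\ref{lem:conic}). For~(1), fix a point $b\neq a$ and parametrize $\ell=\overline{ab}$ by $(s:t)\mapsto s\vect a+t\vect b$, so that $a$ corresponds to $(1:0)$. In \eqref{eq:Taylor-n} with $n=d$ the coefficient of $s^{d}$ is $F(\vect a)$, which vanishes since $a\in C$; thus $(1:0)$ is a zero of $F(s\vect a+t\vect b)$, and since $a$ is a smooth point of~$C$, the line $\ell$ is tangent to $C$ at $a$ exactly when the order of this zero is $\ge 2$, i.e.\ when the coefficient $\Dir_{\vect b}F(\vect a)$ of $s^{d-1}t$ vanishes. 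But $\Dir_{\vect b}F(\vect a)=0$ is precisely the condition $a\in P_{b}(C)$, so, as $a\in C$ is given, tangency of $\ell$ at $a$ is equivalent to $a\in C\cap P_{b}(C)$.

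For~(2) and~(3): by~(1) the tangent line $T_{a}(C)$ is the set of $b$ with $\Dir_{\vect b}F(\vect a)=F'(\vect a)\vect b=0$; this is linear in~$\vect b$, and $F'(\vect a)\neq\vect 0$ because $a$ is a smooth point, so $T_{a}(C)$ is the line $F'(\vect a)\vect x=0$, which contains $a$ since $F'(\vect a)\vect a=d\,F(\vect a)=0$ by Lemma~\ref{lem:euler}. For~(3), write $G(\vect x)=\Dir_{\vect a}F(\vect x)=F'(\vect x)\vect a$ for the equation of $P_{a}(C)$. Then $G(\vect a)=d\,F(\vect a)=0$, so $a\in P_{a}(C)$; the gradient of $G$ at $\vect a$ is $F''(\vect a)\vect a$, and applying Lemma~\ref{lem:euler} to the first partials $F_{x_{i}}$ (each homogeneous of degree $d-1$) gives $F''(\vect a)\vect a=(d-1)\,\tr F'(\vect a)$. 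This is nonzero, so $a$ is a smooth point of $P_{a}(C)$, and by~(2) the tangent to $P_{a}(C)$ at $a$ is again the line $F'(\vect a)\vect x=0=T_{a}(C)$; hence $P_{a}(C)$ is tangent to $C$ at $a$.

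For~(4), the main case, assume $d\ge 2$. By~(2) the line $T_{a}(C)$ is $F'(\vect a)\vect x=0$; choose $\vect b$ so that $\vect a,\vect b$ span it. Restricting $F$ to $T_{a}(C)$ in \eqref{eq:Taylor-n}, the coefficients of $s^{d}$ and of $s^{d-1}t$ vanish (the latter since $b\in T_{a}(C)$), so $a$ is an inflection point, i.e.\ $T_{a}(C)$ meets $C$ at $a$ with multiplicity $\ge 3$, precisely when the next coefficient $\Dir_{\vect b^{2}}F(\vect a)=\tr\vect b\,F''(\vect a)\vect b$ vanishes as well. I then relate this to the conic $Q_{a}:\tr\vect x\,F''(\vect a)\vect x=0$. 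By Lemma~\ref{lem:euler} with $k=2$ we get $\tr\vect a\,F''(\vect a)\vect a=d(d-1)F(\vect a)=0$, so $a\in Q_{a}$; and $F''(\vect a)\vect a=(d-1)\,\tr F'(\vect a)\neq\vect 0$, so by Lemma~\ref{lem:conic}(1) $a$ is a smooth point of $Q_{a}$ whose tangent there, by Lemma~\ref{lem:conic}(3), is $\tr\vect x\,F''(\vect a)\vect a=0$, i.e.\ $F'(\vect a)\vect x=0=T_{a}(C)$. Now if $\tr\vect b\,F''(\vect a)\vect b=0$, then since also $\tr\vect a\,F''(\vect a)\vect a=0$ and $\tr\vect a\,F''(\vect a)\vect b=(d-1)F'(\vect a)\vect b=0$, the quadratic form $\tr\vect x\,F''(\vect a)\vect x$ vanishes on all of $T_{a}(C)$, so $Q_{a}$ contains a line, is therefore degenerate, and $\det F''(\vect a)=0$ by Lemma~\ref{lem:conic}(2); conversely, if $\det F''(\vect a)=0$ then $Q_{a}$ is a degenerate conic with the smooth point $a$ on it, hence a union of two distinct lines (its rank being~$2$), and the one containing $a$ is the tangent to $Q_{a}$ at $a$, namely $T_{a}(C)$, so $T_{a}(C)\subset Q_{a}$ and in particular $\tr\vect b\,F''(\vect a)\vect b=0$. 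Combined with $a\in C$, this will give that $a$ is an inflection point if and only if $a\in C\cap\He(C)$.

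I expect the individual computations to be short; the care will lie in the bookkeeping. In~(1) one must keep in mind that the polar detecting tangency of $\overline{ab}$ at $a$ is $P_{b}(C)$, not $P_{a}(C)$. The real obstacle is in~(4): a degenerate conic a priori contains merely \emph{some} line, whereas what is needed is that $Q_{a}$ contain the \emph{particular} line $T_{a}(C)$. This is exactly where smoothness of $a$ on~$C$ enters -- it forces $F'(\vect a)\neq\vect 0$, hence $F''(\vect a)\vect a\neq\vect 0$, hence $a$ is a smooth point of $Q_{a}$, so a degenerate $Q_{a}$ through $a$ splits off the line that is its tangent there, and that tangent is $T_{a}(C)$. (The hypotheses implicitly require $d\ge 2$, and part~(4) has genuine content only for $d\ge 3$; for a line or a reducible conic $\He(C)=\P^{2}$ and the statement degenerates.)
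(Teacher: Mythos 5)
Your proof is correct and follows essentially the same route as the paper's: tangency and inflection read off from the coefficients of $F(s\vect{a}+t\vect{b})$ in \eqref{eq:Taylor-n}, Euler's formula (Lemma~\ref{lem:euler}) to relate $F'(\vect{a})\vect{a}$, $F''(\vect{a})\vect{a}$ and $F'(\vect{a})$, and Lemma~\ref{lem:conic} applied to the conic $\tr\vect{x}F''(\vect{a})\vect{x}=0$ in part~(4). In fact your treatment of the converse in~(4) (using smoothness of $a$ on the conic to see that the degenerate conic splits off its tangent line at $a$, which is $T_{a}(C)$) spells out a point the paper's proof states more briefly, so nothing is missing.
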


\begin{proof}
(1) \ 
The line $\ell=\overline{ab}$ is tangent to $C$ at $a$ if and only if the  multiplicity of intersection at $a\in\ell \cap C$ is at least two. By \eqref{eq:Taylor-n}, this is equivalent to the condition $F(\vect{a})=0$ and $\Dir_{\vect{b}}F(\vect{a})=0$. This in turn is equivalent to the condition $a\in C\cap P_{b}(C)$. 

(2) \
A point $b$ is on the tangent line $T_{a}(C)$ if and only if $\ell=\overline{ab}$ is tangent to $C$ at $a$.  By the proof of (1), the latter is equivalent to $\Dir_{\vect{b}}F(\vect{a})=0$.  Thus, $\Dir_{\vect{x}}F(\vect{a})=0$ is the equation of~$T_{a}(C)$

(3) \
By Euler's formula, $F(\vect{a})=0$ implies $\Dir_{\vect{a}}F(\vect{a})=0$.  Thus, $P_{a}(C)$ passes through~$a$. 
Suppose $b$ is on $T_{a}(C)$.  Then, we have $\Dir_{\vect{b}}F(\vect{a})=0$ by~(2).  We would like to show that $\overline{ab}$ is also tangent to $P_{a}(C)$.  Applying Euler's formula to the polynomial $\Dir_{\vect{b}}F(\vect{x})$ of degree~$d-1$, we have 
\[
\Dir_{\vect{x}}(\Dir_{\vect{b}}F)(\vect{x})=(d-1)\Dir_{\vect{b}}F(\vect{x}). 
\]
Using the formula $\Dir_{\vect{x}}(\Dir_{\vect{b}}F)(\vect{x})=\Dir_{\vect{b}}(\Dir_{\vect{x}}F)(\vect{x})$ and replacing $\vect{x}$ by $\vect{a}$, we obtain 
\[
\Dir_{\vect{b}}(\Dir_{\vect{a}}F)(\vect{a})=(d-1)\Dir_{\vect{b}}F(\vect{a})=0. 
\]
This implies that $b$ is tangent at $a$ to the curve defined by $\Dir_{\vect{a}}F(\vect{x})=0$, which is nothing but $P_{a}(C)$.

(4) \ 
The line $\ell=\overline{ab}$ is an inflection tangent to $C$ at $a$ if and only if $F(\vect{a})=\Dir_{\vect{b}}F(\vect{a})=\Dir_{\vect{b}^{2}}F(\vect{a})=0$.  Thus, if $T_{a}(C)$ is an inflection tangent, any point $b\in T_{a}(C)$ satisfies the condition $\Dir_{\vect{b}^{2}}F(\vect{a})=0$.  This implies that the tangent line $\Dir_{\vect{x}}F(\vect{a})=0$ is contained in the curve defined by $\Dir_{\vect{x}^{2}}F(\vect{a})=0$ as a component. Since $\Dir_{\vect{x}^{2}}F(\vect{a})=\tr\vect{x}F''(\vect{a})\vect{x}$, $\Dir_{\vect{x}^{2}}F(\vect{a})=0$ is a conic, and this conic is degenerate if and only if $\det F''(\vect{a})=0$ by Lemma~\ref{lem:conic}(2).  Thus, $a\in C\cap \He(C)$. 

Conversely, suppose $a\in C\cap \He(C)$.  Since $\det F''(\vect{a})=0$, the conic $\Dir_{\vect{x}^{2}}F(\vect{a})=0$ is degenerate.  This conic passes through $a$ since $\Dir_{\vect{a}^{2}}F(\vect{a})=d(d-1)F(\vect{a})=0$ by Euler's formula (Lemma~\ref{lem:euler}).  Also, the tangent line $T_{a}(C):\Dir_{\vect{x}}F(\vect{a})=0$ is contained in the degenerate conic $\Dir_{\vect{x}^{2}}F(\vect{a})=0$.  This is because the tangent line to this conic at $a$ is given by $\tr\vect{x}F''(\vect{a})\vect{a}=0$ by Lemma~\ref{lem:conic}(3), and  $\tr\vect{x}F''(\vect{a})\vect{a}=\Dir_{\vect{a}}\Dir_{\vect{x}}F(\vect{a})=(d-1)\Dir_{\vect{x}}F(\vect{a})$ again by Euler's formula (applied to the polynomial $\Dir_{\vect{x}}F(\vect{\vect{y}})$ of degree $d-1$ in $\vect{y}$).  Thus, $T_{a}(C)$ is an inflection tangent and $a$ is an inflection point.
\end{proof}

From now on we focus on the case where $C$ is a cubic curve.  In this case the Taylor expansion formula around the point $(s:t)=(1:0)$ gives a homogenous cubic polynomial in $s$ and~$t$:
\begin{multline}\label{eq:Taylor-a}
F(s\vect{a}+t\vect{b})=
F(\vect{a}) s^{3}
+ \Dir_{\vect{b}}F(\vect{a}) s^{2}t
+\frac{1}{2!}\Dir_{\vect{b}^{2}}F(\vect{a}) st^{2}
+\frac{1}{3!}\Dir_{\vect{b}^{3}}F(\vect{a}) t^{3}.
\end{multline}
Exchanging the roles of $\vect{a}$ and $\vect{b}$ in \eqref{eq:Taylor-a}, that is, using the Taylor expansion  formula around the point $(s:t)=(0:1)$,
we have another form of expansion:
\begin{multline}\label{eq:Taylor-b}
F(s\vect{a}+t\vect{b})=
F(\vect{b}) t^{3} 
+ \Dir_{\vect{a}}F(\vect{b}) st^{2}
+\frac{1}{2!}\Dir_{\vect{a}^{2}}F(\vect{b}) s^{2}t
+\frac{1}{3!}\Dir_{\vect{a}^{3}}F(\vect{b}) s^{3}.
\end{multline}
Comparing the corresponding coefficients in \eqref{eq:Taylor-a} and \eqref{eq:Taylor-b}, we have
\begin{equation}\label{eq:symm}
\begin{gathered}
F(\vect{a}) = \frac{1}{3!}\Dir_{\vect{a}^{3}}F(\vect{b}), 
\qquad
\Dir_{\vect{b}}F(\vect{a}) 
= \frac{1}{2!}\Dir_{\vect{a}^{2}}F(\vect{b}), \\
\frac{1}{2!}\Dir_{\vect{b}^{2}}F(\vect{a}) 
= \Dir_{\vect{a}}F(\vect{b}),
\qquad
\frac{1}{3!}\Dir_{\vect{b}^{3}}F(\vect{a}) = F(\vect{b}).
\end{gathered}
\end{equation}
With matrix notation the second and the third relation may be written as follows:
\begin{gather}\label{eq:mat-notation}
F'(\vect{a}) \vect{b}
=\frac{1}{2!}\left(\tr\vect{a} F''(\vect{b})\vect{a}\right),
\quad
\frac{1}{2!}\left(\tr\vect{b} F''(\vect{a})\vect{b}\right)
= F'(\vect{b}) \vect{a}.
\end{gather}

\begin{prop}\label{prop:2-2}
Let $C$ be a plane cubic curve defined by an equation $F(\vect{x})=0$.
\begin{enumerate}
\item 
If $a$ is a smooth point of~$C$, then the equation of the tangent line $T_{a}(C)$ at $a$ may be written in two different forms
\[
\Dir_{\vect{x}}F(\vect{a})=0, \quad\text{and}\quad 
\Dir_{\vect{a^{2}}}F(\vect{x})=0.
\]
In particular, the second polar $P_{a^{2}}(C)$ of $C$ is the tangent line $T_{a}(C)$. 
\item
If $a$ is a singular point of~$C$, then $P_{a^{2}}(C)$ coincides with $\P^{2}$.
\end{enumerate}
\end{prop}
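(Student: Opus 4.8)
The plan is to deduce both statements directly from the symmetry identities~\eqref{eq:symm} (equivalently~\eqref{eq:mat-notation}), together with Proposition~\ref{prop:2-1}(2). The decisive point is that, since $C$ is a cubic, the Hessian matrix $F''(\vect{x})$ has entries linear in $\vect{x}$, so that for a fixed vector $\vect{a}$ the second polar expression
\[
\Dir_{\vect{a}^{2}}F(\vect{x})=\tr\vect{a}\,F''(\vect{x})\,\vect{a}
\]
is a homogeneous form of degree at most~$1$ in $\vect{x}$; hence $P_{a^{2}}(C)$ is either a line or the whole plane $\P^{2}$, and the proposition amounts to deciding which of these occurs and identifying it. On the other hand, renaming $\vect{b}$ as $\vect{x}$ in the second relation of~\eqref{eq:symm} (equivalently, the first relation of~\eqref{eq:mat-notation}) yields the identity of forms in $\vect{x}$
\[
\Dir_{\vect{x}}F(\vect{a})=F'(\vect{a})\,\vect{x}=\frac{1}{2}\,\Dir_{\vect{a}^{2}}F(\vect{x}),
\]
valid identically. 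Thus $\Dir_{\vect{a}^{2}}F(\vect{x})$ and the linear form $\Dir_{\vect{x}}F(\vect{a})$ differ only by the nonzero scalar~$2$, and everything follows by comparing them in the two cases.

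For part~(1), since $a$ is a smooth point of~$C$ the gradient row vector $F'(\vect{a})$ is nonzero, so $\Dir_{\vect{x}}F(\vect{a})=F'(\vect{a})\vect{x}=0$ genuinely defines a line, which by Proposition~\ref{prop:2-1}(2) is precisely the tangent line $T_{a}(C)$. By the displayed identity, $\Dir_{\vect{a}^{2}}F(\vect{x})=0$ defines the same line; since $P_{a^{2}}(C)$ is by definition the curve $\Dir_{\vect{a}^{2}}F(\vect{x})=0$, this gives the two promised forms of the equation of $T_{a}(C)$ and the identification $P_{a^{2}}(C)=T_{a}(C)$.

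For part~(2), if $a$ is a singular point of~$C$ then all first partials of $F$ vanish at~$a$, i.e. $F'(\vect{a})=\vect{0}$, so $\Dir_{\vect{x}}F(\vect{a})\equiv 0$. The identity above then forces $\Dir_{\vect{a}^{2}}F(\vect{x})\equiv 0$ identically in $\vect{x}$, so the defining equation of $P_{a^{2}}(C)$ is trivially satisfied and $P_{a^{2}}(C)=\P^{2}$.

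I do not expect any real obstacle here: the statement is a formal corollary of the cubic Taylor/symmetry identities already recorded in~\eqref{eq:symm}. The only items requiring care are the bookkeeping of degrees — verifying that for a cubic $\Dir_{\vect{a}^{2}}F(\vect{x})$ is at most linear in $\vect{x}$, so that the dichotomy ``$P_{a^{2}}(C)$ is a line'' versus ``$P_{a^{2}}(C)=\P^{2}$'' is exhaustive — and checking that this dichotomy matches exactly the vanishing or non-vanishing of $F'(\vect{a})$, which is the smooth/singular alternative for a point lying on~$C$.
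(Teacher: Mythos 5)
Your proof is correct and follows essentially the same route as the paper: both rest on the symmetry relation~\eqref{eq:symm} (equivalently~\eqref{eq:mat-notation}) to identify $\Dir_{\vect{a}^{2}}F(\vect{x})$ with a nonzero multiple of $F'(\vect{a})\vect{x}$, combined with Proposition~\ref{prop:2-1}(2) for part~(1). The only cosmetic difference is in part~(2), where you invoke the Jacobian criterion $F'(\vect{a})=\vect{0}$ directly, while the paper derives the same vanishing of $\Dir_{\vect{b}}F(\vect{a})$ for all $b$ from the intersection-multiplicity description via~\eqref{eq:Taylor-a}.
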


\begin{proof}
(1) \  
By Proposition~\ref{prop:2-1}(2), $\Dir_{\vect{x}}F(\vect{a})=0$ is the equation of $T_{a}(C)$. By \eqref{eq:symm} this equation is equivalent to $\Dir_{\vect{a^{2}}}F(\vect{x})=0$.  But, this is nothing but the equation of the second polar $P_{a^{2}}(C)$, and thus, $P_{a^{2}}(C)$ coincides with $T_{a}$.

(2) \
If $a$ is a singular point, the multiplicity of the intersection $\overline{ab}\cap C$ at $a$ is always greater than~$1$.  It follows from \eqref{eq:Taylor-a} that for any point $b\in\P^{2}$, $\Dir_{\vect{b}}F(\vect{a})=0$.  Then, by \eqref{eq:symm}, we have $\Dir_{\vect{a^{2}}}F(\vect{b})=0$ for any $b\in\P^{2}$, which implies $P_{a^{2}}(C)=\P^{2}$.
\end{proof}

\begin{figure}\label{fig:2}
\includegraphics[scale=0.36]{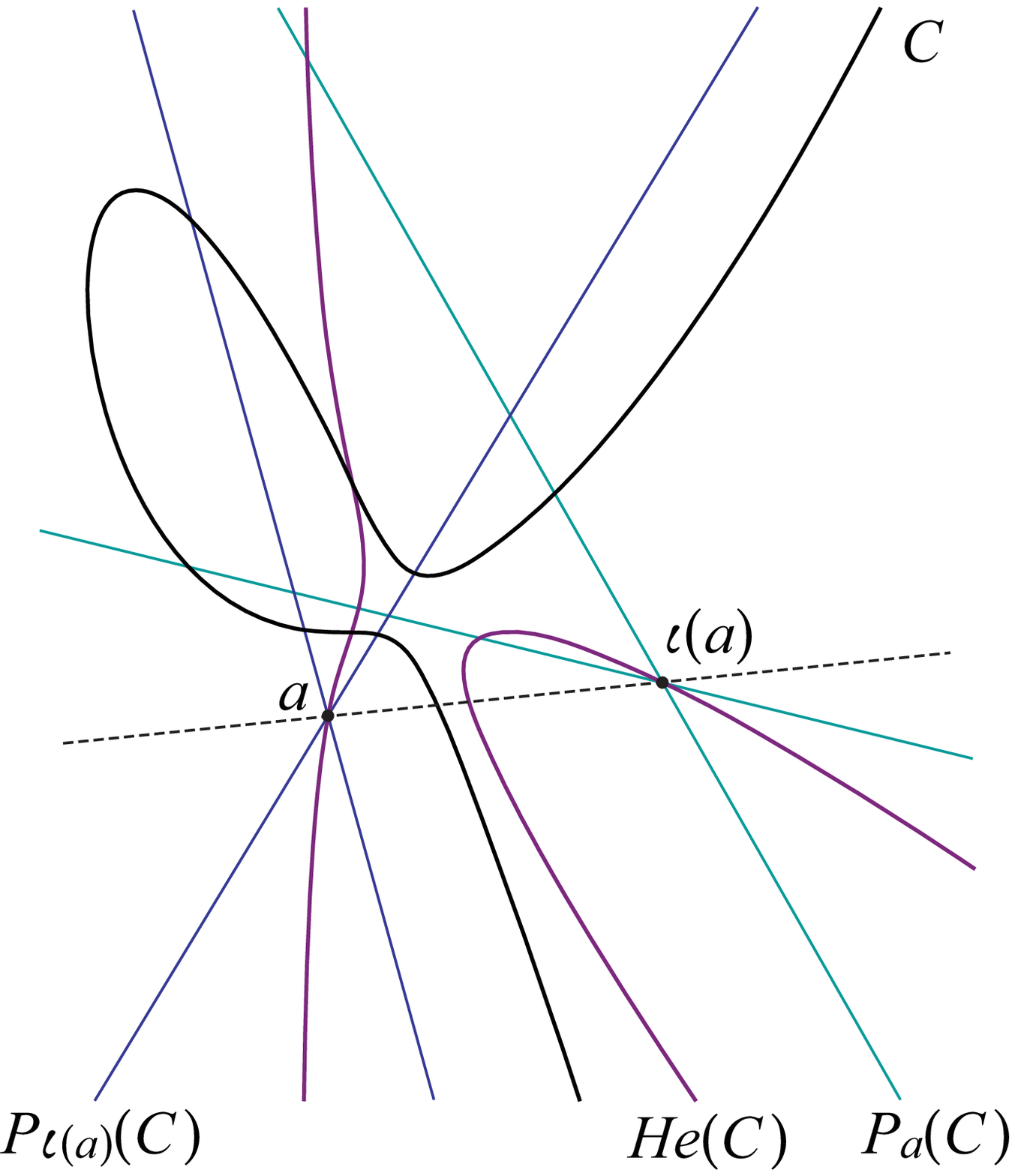}
\qquad\quad\includegraphics[scale=0.36]{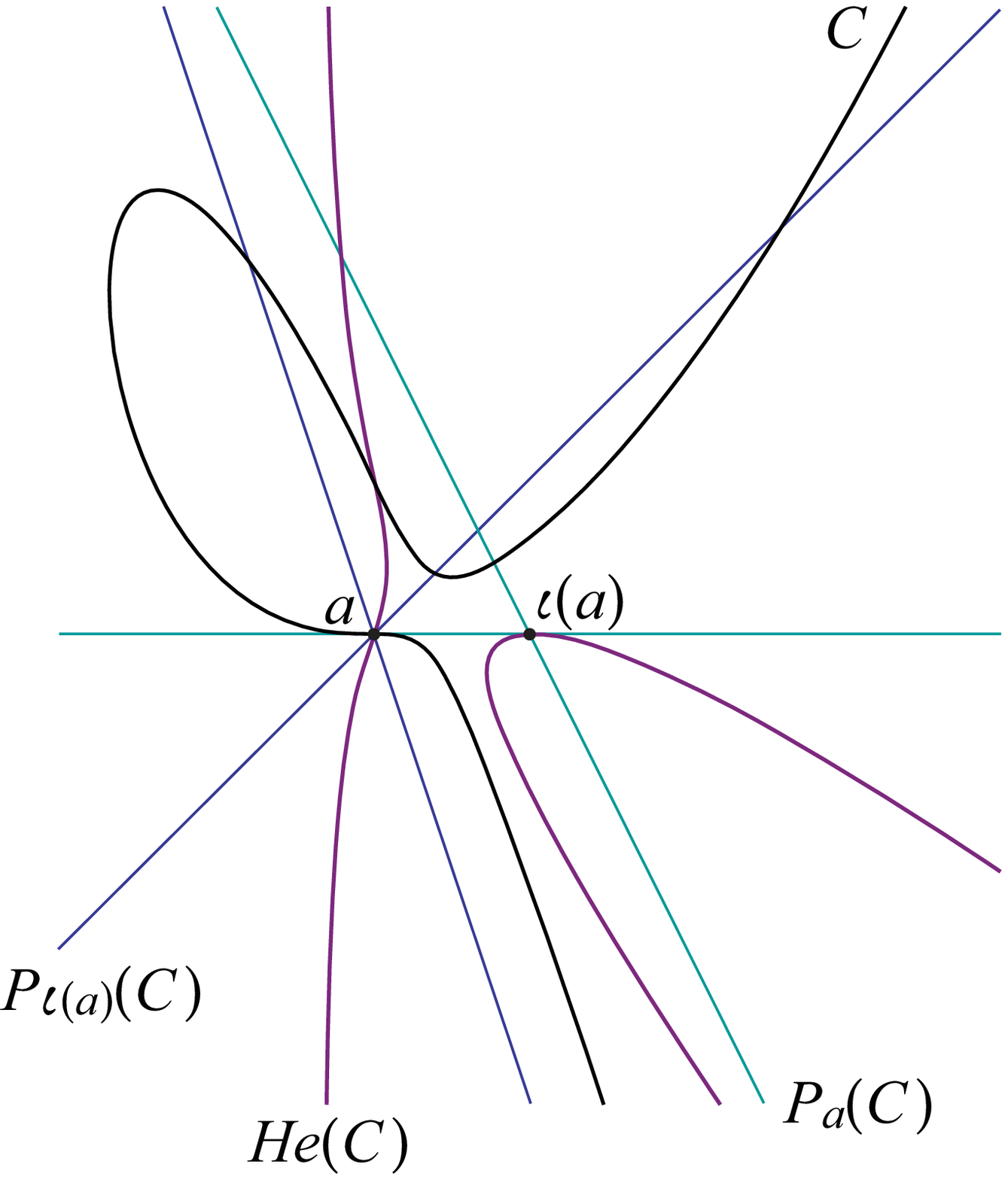}
\caption{Involution on the Hessian curve}
\end{figure}

\begin{prop}\label{prop:2-3}
Let $C$ be a plane cubic curve. Suppose that the Hessian $\He(C)$ does not coincide with $\P^{2}$.
\begin{enumerate} 
\item
A point $a\in \P^{2}$ is on $\He(C)$ if and only if the first polar $P_{a}(C)$ is degenerate.
\item 
Suppose $a$ is on $\He(C)$.  Let $b$ be a singular point of the degenerated first polar $P_{a}(C)$.  Then, $b$ is again on $\He(C)$, and $a$ is a singular point of $P_{b}(C)$.
\end{enumerate}
\end{prop}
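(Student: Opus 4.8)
The plan is to work directly with the Hessian matrix $F''(\vect{x})$ and exploit the symmetry relations \eqref{eq:symm}, \eqref{eq:mat-notation} between the polars of $\vect{a}$ and $\vect{b}$. For part (1), note that the first polar $P_{a}(C)$ is the conic $\Dir_{\vect a}F(\vect x)=0$. By Euler's formula, $\Dir_{\vect a}F(\vect x)$ is (up to a constant) the quadratic form $\tr\vect x\,F''(\vect x)\cdot$—more precisely I should express $P_{a}(C)$ as $\tr\vect x\, M_{a}\,\vect x = 0$ for the symmetric matrix $M_{a}$ whose entries are the linear forms obtained by differentiating $F$ once in the direction $\vect a$; concretely $M_{a} = $ the matrix of the quadratic form $\Dir_{\vect a}F$, and one checks $M_{a}\vect x = \tfrac12 F''(\vect x)\vect a$ by a short computation using the symmetry of third partials (this is essentially the content of \eqref{eq:mat-notation}). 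Then by Lemma~\ref{lem:conic}(2), $P_{a}(C)$ is degenerate if and only if $\det M_{a}=0$. So I must identify $\det M_{a}$ with the Hessian evaluated at $a$. Since the entries of $M_{a}$ are linear in the coordinates of $a$ (as $F$ is cubic, $\Dir_{\vect a}F$ has coefficients linear in $\vect a$), $\det M_{a}$ is a cubic form in $\vect a$; I claim it equals a nonzero scalar multiple of $\det F''(\vect a)$. The cleanest way to see this is to observe that the matrix of the quadratic form $\Dir_{\vect a}F(\vect x)$ in the variable $\vect x$ is obtained from the constant Hessian pattern by the substitution that replaces the "second variable slot" by $\vect a$: for a cubic $F$, $F''(\vect x)$ is linear in $\vect x$, so $F''(\vect x)\vect a$ has each component a quadratic form, and differentiating once more in $\vect x$ recovers $F''$ with one slot frozen at $\vect a$; making this precise shows $M_{a}$ is, entrywise, $F''$ with its argument specialized to $\vect a$, hence $\det M_{a} = \det F''(\vect a)$ up to the combinatorial constant. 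This proves (1).

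For part (2), suppose $a\in\He(C)$ and let $b$ be a singular point of the degenerate conic $P_{a}(C)$. By Lemma~\ref{lem:conic}(1) applied to $P_a(C): \tr\vect x M_a\vect x=0$, singularity of $b$ means $M_{a}\vect b=\vect 0$, i.e.\ $F''(\vect a)\vect b=\vect 0$ (up to scalar), using the identification from part (1) that $M_a$ is $F''$ evaluated at $\vect a$. But $F''(\vect a)$ is a symmetric matrix, so $F''(\vect a)\vect b=\vect 0$ is a symmetric condition in the following sense: I must now show it is equivalent to $F''(\vect b)\vect a = \vect 0$. This is exactly where the bilinear symmetry \eqref{eq:symm}/\eqref{eq:mat-notation} enters: the relation $\tfrac12\tr\vect b\,F''(\vect a)\vect b = F'(\vect b)\vect a$ and its companion relate $F''(\vect a)$ applied to $\vect b$ with $F''(\vect b)$ applied to $\vect a$. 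The key identity to establish is that, for a cubic $F$, $\tr\vect c\, F''(\vect a)\vect b$ is totally symmetric in $\vect a,\vect b,\vect c$ (it is the full polarization of $F$, equal to $\Dir_{\vect a}\Dir_{\vect b}\Dir_{\vect c}F$ up to a constant), which follows from the equality of mixed partial derivatives. Granting that symmetry, $F''(\vect a)\vect b=\vect 0$ says $\tr\vect c F''(\vect a)\vect b=0$ for all $\vect c$, which by total symmetry says $\tr\vect a F''(\vect b)\vect c = 0$ for all $\vect c$, i.e.\ $\tr(F''(\vect b)\vect a)\vect c=0$ for all $\vect c$, hence $F''(\vect b)\vect a=\vect 0$. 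Translating back: $\det F''(\vect b)=0$, so $b\in\He(C)$ by part (1); and $M_{b}\vect a = F''(\vect b)\vect a=\vect 0$, so by Lemma~\ref{lem:conic}(1) $a$ is a singular point of $P_{b}(C)$.

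The main obstacle is bookkeeping: pinning down the precise scalar relating $\det M_{a}$ to $\det F''(\vect a)$ and the precise scalar in $M_{a}\vect b = c\,F''(\vect a)\vect b$, and confirming in each application that "singular point of $P_a(C)$" really does translate to "$F''(\vect a)\vect b=\vect 0$" as opposed to merely "$M_a\vect b=\vect 0$" for a matrix differing from $F''(\vect a)$ by a nonzero constant—which is harmless, since we only ever use vanishing, not the value. The hypothesis $\He(C)\neq\P^2$ is used exactly to ensure $\det F''(\vect x)$ is not identically zero, so that $\He(C)$ is a genuine cubic curve and the equivalences in part (1) have content; I should remark on this once. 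I do not expect any serious geometric difficulty beyond correctly invoking the total symmetry of the third polarization of a cubic, which is the algebraic heart of both parts.
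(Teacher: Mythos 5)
Your proposal is correct and follows essentially the same route as the paper: identify $P_{a}(C)$ with the conic $\tr\vect{x}F''(\vect{a})\vect{x}=0$ via the symmetry relations \eqref{eq:symm}, then apply Lemma~\ref{lem:conic}. The only cosmetic difference is in part (2), where the paper reads off $F''(\vect{b})\vect{a}=\vect{0}$ directly from the Jacobian criterion for $\Dir_{\vect{a}}F(\vect{x})=0$, while you first get $F''(\vect{a})\vect{b}=\vect{0}$ from Lemma~\ref{lem:conic}(1) and then pass to $F''(\vect{b})\vect{a}=\vect{0}$ by the total symmetry of the trilinear polarization of the cubic --- the same underlying fact about mixed partials.
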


\begin{proof}
(1) \ 
Using \eqref{eq:symm}, we see that the equation of $P_{a}(C)$ can also be written in the form
\begin{equation}\label{eq:polar2}
\Dir_{\vect{x}^{2}}F(\vect{a})= \tr\vect{x}F''(\vect{a})\vect{x}=0.
\end{equation}
Thus, $P_{a}(C)$ is degenerate if and only if $\det F''(\vect{a}) =0$.

(2) \ 
If $a\in\He(C)$, it follows from (1) that $P_{a}(C)$ has a singular point. The first polar $P_{a}(C)$ is defined by the equation $\Dir_{\vect{a}}F(\vect{x})=0$.  By the Jacobian criterion, a singular point of $P_{a}(C)$ is a solution of the system of equations 
\[
\frac{\partial}{\partial x} \Dir_{\vect{a}}F(\vect{x})=
\frac{\partial}{\partial y} \Dir_{\vect{a}}F(\vect{x})=
\frac{\partial}{\partial z} \Dir_{\vect{a}}F(\vect{x})=0.
\]
With matrix notation these equations can be combined into one equation 
\begin{equation}\label{eq:criterion1}
F''(\vect{x})\vect{a}=\vect{0}.
\end{equation}
Now, if $b=(b_{0}:b_{1}:b_{2})$ is a singular point of $P_{a}(C)$, then $\vect{b}=\tr(b_{0},b_{1},b_{2})$ satisfies the equation~\eqref{eq:criterion1}, that is, we have $F''(\vect{b})\vect{a}=\vect{0}$.  This, in particular, implies that $\det F''(\vect{b})=0$.  This shows $b\in \He(C)$.

Meanwhile, $P_{b}(C)$ is given by the equation $\tr\vect{x}F''(\vect{b})\vect{x}=0$ just as $P_{a}(C)$ is given by~\eqref{eq:polar2}.  Thus, by Lemma~\ref{lem:conic}, a singular point of such a conic is a solution of the equation 
\begin{equation}\label{eq:criterion2}
F''(\vect{b})\vect{x}=\vect{0}.
\end{equation}
Then, the condition $F''(\vect{b})\vect{a}=\vect{0}$  can be interpreted that $\vect{a}$ satisfies the equation~\eqref{eq:criterion2}. This implies that $a$ is a singular point of $P_{b}(C)$.
\end{proof}

\begin{prop}\label{prop:involution}
Let $C$ be a plane cubic curve. Suppose that the Hessian $\He(C)$ is a   nonsingular cubic curve.
\begin{enumerate}
\item
If $a$ is on $\He(C)$, then $P_{a}(C)$ is the union of two distinct lines.
\item 
The map that associates $a\in\He(C)$ to the unique singular point $b$ of $P_{a}(C)$ determines an involution $\iota$ on $\He(C)$ without fixed points.
\item
If $a\in C\cap \He(C)$, then the inflection tangent line $T_{a}(C)$ is contained as a component in the degenerated first polar curve $P_{a}(C)$.  
\end{enumerate}
\end{prop}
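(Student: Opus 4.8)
The plan is to read off all three parts from Proposition~\ref{prop:2-3}, using only that a nonsingular plane cubic is irreducible and hence contains no line.

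For (1): since $a\in\He(C)$, Proposition~\ref{prop:2-3}(1) says $P_a(C)$ is a degenerate conic, hence a pair of distinct lines or a double line. I would rule out the double-line case: if $P_a(C)=2\ell$ then every point of $\ell$ is a singular point of $P_a(C)$, so by Proposition~\ref{prop:2-3}(2) every point of $\ell$ lies on $\He(C)$; but a nonsingular plane cubic is irreducible (two distinct components of a plane curve meet, producing a singular point), so it contains no line. This contradiction forces $P_a(C)$ to be a union of two distinct lines.

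For (2): by (1) each $P_a(C)$ with $a\in\He(C)$ has a unique singular point, the meeting point of its two components, so $\iota$ is a well-defined map $\He(C)\to\P^{2}$. Proposition~\ref{prop:2-3}(2) gives $b:=\iota(a)\in\He(C)$ together with the fact that $a$ is a singular point of $P_b(C)$; applying (1) to $b$ shows $a$ is the unique such point, so $\iota(b)=a$ and $\iota\circ\iota=\mathrm{id}$. For fixed-point-freeness, suppose $\iota(a)=a$. By Lemma~\ref{lem:conic}(1) applied to the conic $P_a(C):\tr\vect x F''(\vect a)\vect x=0$ (see \eqref{eq:polar2}), its singular point is the unique point killed by $F''(\vect a)$, so $\iota(a)=a$ means $F''(\vect a)\vect a=\vect 0$. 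Applying Euler's formula (Lemma~\ref{lem:euler}) to the quadratic entries of $F'$ yields $F''(\vect x)\vect x=2\,\tr F'(\vect x)$ for a cubic, so $F''(\vect a)\vect a=\vect 0$ forces $F'(\vect a)=\vect 0$, i.e. $a$ is a singular point of $C$. It remains to contradict the nonsingularity of $\He(C)$, and this is where I expect the real work: moving $a$ to $\tr(0,0,1)$ and writing $F(x,y,z)=z\,q(x,y)+c(x,y)$ with $q$ quadratic and $c$ cubic in $x,y$, the last row of $F''(\vect x)$ is $(q_x,q_y,0)$, and the two relevant cofactors in the last-row expansion of $\det F''$ are again linear combinations of $q_x$ and $q_y$ with polynomial coefficients; hence $\det F''$ is a quadratic expression in the linear forms $q_x,q_y$ and vanishes to order $\ge 2$ at $a$, so $\He(C)$ is singular at $a$, a contradiction.

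For (3): if $a\in C\cap\He(C)$ then $a$ is an inflection point of $C$ by Proposition~\ref{prop:2-1}(4), and the computation in the proof of that part shows the inflection tangent $T_a(C):\Dir_{\vect x}F(\vect a)=0$ is a component of the conic $\Dir_{\vect x^{2}}F(\vect a)=\tr\vect x F''(\vect a)\vect x=0$. By the symmetry relations \eqref{eq:symm} (equivalently \eqref{eq:polar2}) this conic is exactly the first polar $P_a(C):\Dir_{\vect a}F(\vect x)=0$, which by (1) is the union of $T_a(C)$ and a second line; hence $T_a(C)$ is a component of $P_a(C)$. The only genuinely delicate step is the local computation for fixed-point-freeness in (2); parts (1) and (3) are bookkeeping on top of Propositions~\ref{prop:2-1} and~\ref{prop:2-3}.
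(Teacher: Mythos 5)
Your proof is correct, and for part (1) and the involutivity claim in (2) it is essentially the paper's own argument: degeneracy of $P_a(C)$ from Proposition~\ref{prop:2-3}(1), the double-line case excluded because all of its points would be singular and hence, by Proposition~\ref{prop:2-3}(2), lie on $\He(C)$, forcing a line inside the nonsingular cubic $\He(C)$; and $\iota^2=\mathrm{id}$ from Proposition~\ref{prop:2-3}(2) plus uniqueness of the singular point. Where you genuinely diverge is the fixed-point-freeness step: the paper also reduces $\iota(a)=a$ to $F''(\vect{a})\vect{a}=\vect{0}$ and then to $F'(\vect{a})=\vect{0}$, but it concludes by ``contradicting the nonsingularity of $C$,'' which is not among the stated hypotheses of this proposition (only $\He(C)$ is assumed nonsingular). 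Your extra local computation --- writing $F=zq(x,y)+c(x,y)$ at a singular point of $C$ and observing $\det F''=-\bigl(F_{xx}q_y^2-2F_{xy}q_xq_y+F_{yy}q_x^2\bigr)$, so that $\det F''$ lies in the square of the maximal ideal at $a$ --- closes exactly this gap: it shows a singular point of $C$ is automatically a singular point of $\He(C)$, so the stated hypothesis suffices; as a byproduct it also justifies invoking Proposition~\ref{prop:2-1} in part (3), where smoothness of $a$ on $C$ is a standing assumption (the paper leaves this implicit as well). For (3) itself your route differs mildly from the paper's: the paper uses Proposition~\ref{prop:2-1}(3) (tangency of $P_a(C)$ to $C$ at $a$) together with the fact from (2) that the node of $P_a(C)$ is $\iota(a)\neq a$, so the component of $P_a(C)$ through $a$ must be $T_a(C)$; you instead use the inflection computation in Proposition~\ref{prop:2-1}(4) plus the identification \eqref{eq:polar2} of $P_a(C)$ with the conic $\tr\vect{x}F''(\vect{a})\vect{x}=0$. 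Both are valid and of comparable length. One small omission in (1): a priori the degenerate locus $\tr\vect{x}F''(\vect{a})\vect{x}=0$ could also be all of $\P^2$ (rank $0$, i.e. $F''(\vect{a})=0$), a case the paper's proof mentions explicitly; your own argument disposes of it verbatim, since then every point of $\P^2$ would be singular on $P_a(C)$ and hence lie on $\He(C)$, contradicting that $\He(C)$ is a cubic curve.
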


\begin{proof}
(1) \ 
If $a$ is on $\He(C)$ and $P_{a}(C)$ is a double line or the entire plane, then all the points $b$ on $P_{a}(C)$ are singular points.  Then, by 
Proposition~\ref{prop:2-3}\,(2), $b$ is on $\He(C)$.  This means $P_{a}(C)$ is contained in $\He(C)$ as a component.  This contradicts the assumption that $\He(C)$ is nonsingular.

(2) \ 
Proposition~\ref{prop:2-3}\,(2) shows that the map $\iota$ described in the statement is indeed an involution.
It only remains to prove that this involution does not have any fixed point. If $a$ is a fixed point of $\iota$, then $a$ is the singular point of $P_{a}(C)$. 
Since $P_{a}(C)$ is a conic given by $\tr\vect{x}F''(\vect{a})\vect{x}=0$, this implies $F''(\vect{a})\vect{a}=\vect{0}$ by Lemma~\ref{lem:conic}(1).  In general, for a conic $Q$ given by $\tr\vect{x}M\vect{x}=0$ with a symmetric matrix $M$, its first polar $P_{a}(Q)$ is given by $\tr\vect{x}M\vect{a}=0$.  Thus, for any $\vect{x}$ we have
\[
\Dir_{\vect{a}^{2}}F(\vect{x})
=\Dir_{\vect{a}}\bigl(\tr\vect{x}F''(\vect{a})\vect{x}\bigr)
=\tr\vect{x}F''(\vect{a})\vect{a}
=0
\]
This means that $P_{a^{2}}(C)=\P^{2}$.  Then, for any $\vect{x}$ we have
\[
\Dir_{\vect{a}^{2}}F(\vect{x})=\Dir_{\vect{x}}F(\vect{a})=F'(\vect{a})\vect{x}=0.
\] 
This implies $F'(\vect{a})=0$, and thus $a$ is a singular point of $C$.  This contradits the assumption that $C$ is nonsingular.  Hence, the involution does not have a fixed point.  

(3) \ 
If $a\in C$, then $P_{a}(C)$ is tangent to $C$ at $a$ by Proposition~\ref{prop:2-1}(3), and thus $P_{a}(C)$ is tangent to $T_{a}(C)$ at~$a$.  If furthermore $a\in\He(C)$, then $P_{a}(C)$ is degenerate, and by~(2) the unique singular point $b=\iota(a)$ of $P_{a}(C)$ is different from~$a$.  Thus, if $a\in C\cap \He(C)$, $T_{a}(C)$ is a component of $P_{a}(C)$.
\end{proof}

\section{Cayleyan curve}

Let $C$ be a cubic curve, and $\He(C)$ its Hessian.  Throughout this section we consider the case where $\He(C)$ is a nonsingular curve.  Then, Proposition~\ref{prop:involution}\,(2) implies that $\He(C)$ admit a fixed-point-free involution $\iota$. 

\begin{definition}
Let $\eta$ be the map defined by 
\[
\eta:\He(C)\to \dP^{2}; \quad a\mapsto \overline{a\,\iota(a)}.
\]
The Cayleyan curve of $C$ is defined as the image of~$\eta$.
\end{definition}

It is easy to see that $\eta$ is an unramified double cover since $\iota$ is an involution without a fixed point.

Let $J_{H}=J(\He(C))$ be the Jacobian of $\He(C)$.  $J_{H}$ acts on $\He(C)$ by a translation. Choose one of the inflection points of $\He(C)$ as the origin $o$, and identify $\He(C)$ with its Jacobian~$J_{H}$.  A fixed point free involution corresponds to a translation by a point of order~$2$.  Let $\tau$ be the element of order~$2$ corresponding to the involution~$\iota$.  We have $\iota(a)=a+\tau$.   Then, we have a diagram:
\[
\xymatrix{
J_{H}=\He(C) \ar[r]^{\eta} \ar[d]
& \Cay(C) \subset\dP^{2}
\\ 
J_{H}/\<\tau\>=\He(C)/\<\iota\> \ar[ru]_{\cong} &} 
\]
Thus, $\Cay(C)$ may be identified with the quotient $\He(C)\to \He(C)/\<\iota\>$.

\begin{prop}\label{prop:3-1}
Let $C$ be a cubic curve defined by an equation $F(\vect{x})=0$ such that its Hessian $\He(C)$ is a nonsingular cubic curve. If $a$ is a point of $\He(C)$, 
then the second polar curve $P_{a^{2}}(C)$ is the tangent line to $\He(C)$ at $\iota(a)$.
\end{prop}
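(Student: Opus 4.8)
The statement asks us to identify two lines, so the plan is to write each of them as the zero locus of an explicit linear form and then show the two forms are proportional. On one side, by definition $P_{a^{2}}(C)$ is the locus $\Dir_{\vect{a}^{2}}F(\vect{x})=\tr\vect{a}F''(\vect{x})\vect{a}=0$; since $F$ is a cubic, the entries of $F''(\vect{x})$ are linear forms, so $\Dir_{\vect{a}^{2}}F(\vect{x})$ is a linear form in $\vect{x}$. On the other side, put $b=\iota(a)$ and $H(\vect{x})=\det F''(\vect{x})$, so that $\He(C)$ is the cubic $H(\vect{x})=0$. Because $\He(C)$ is nonsingular, $b$ is a smooth point of it, and by Proposition~\ref{prop:2-1}(2) its tangent line there is $\Dir_{\vect{x}}H(\vect{b})=H'(\vect{b})\vect{x}=0$. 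Thus it suffices to show that the two linear forms $\Dir_{\vect{a}^{2}}F(\vect{x})$ and $H'(\vect{b})\vect{x}$ differ by a nonzero scalar factor.

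First I would pin down the algebra of the Hessian matrix at $b$. By the construction of $\iota$ and (the proof of) Proposition~\ref{prop:2-3}(2), the point $b=\iota(a)$ lies on $\He(C)$ and satisfies $F''(\vect{b})\vect{a}=\vect{0}$. By Proposition~\ref{prop:involution}(1) the first polar $P_{b}(C):\tr\vect{x}F''(\vect{b})\vect{x}=0$ is a union of two distinct lines, so the symmetric matrix $F''(\vect{b})$ has rank exactly~$2$; as $\vect{a}$ is a nonzero vector in its kernel, $\ker F''(\vect{b})=\langle\vect{a}\rangle$. From $\det F''(\vect{b})=0$ we get $F''(\vect{b})\operatorname{adj}\bigl(F''(\vect{b})\bigr)=\vect 0$, so every column of $\operatorname{adj}(F''(\vect{b}))$ is a multiple of $\vect{a}$; combined with the symmetry of $\operatorname{adj}(F''(\vect{b}))$ and with the fact that a rank-$2$ matrix has nonzero adjugate, this forces $\operatorname{adj}\bigl(F''(\vect{b})\bigr)=\mu\,\vect{a}\,\tr\vect{a}$ for some scalar $\mu\neq 0$.

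Then I would differentiate $H=\det F''$ using Jacobi's formula: $\partial H/\partial x_{k}=\operatorname{tr}\bigl(\operatorname{adj}(F''(\vect{x}))\cdot\partial_{k}F''(\vect{x})\bigr)$, where $\partial_{k}F''(\vect{x})$ is the constant symmetric matrix of third partials $\partial^{3}F/\partial x_{i}\partial x_{j}\partial x_{k}$. Evaluating at $\vect{b}$ and substituting $\operatorname{adj}(F''(\vect{b}))=\mu\,\vect{a}\,\tr\vect{a}$ gives $\partial H/\partial x_{k}(\vect{b})=\mu\,\tr\vect{a}\bigl(\partial_{k}F''\bigr)\vect{a}$. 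But $\tr\vect{a}\bigl(\partial_{k}F''\bigr)\vect{a}=\sum_{i,j}a_{i}a_{j}\,\partial^{3}F/\partial x_{i}\partial x_{j}\partial x_{k}$ is exactly the coefficient of $x_{k}$ in the linear form $\Dir_{\vect{a}^{2}}F(\vect{x})=\sum_{i,j}a_{i}a_{j}\,\partial^{2}F/\partial x_{i}\partial x_{j}$. Hence $H'(\vect{b})\vect{x}=\sum_{k}\bigl(\partial H/\partial x_{k}(\vect{b})\bigr)x_{k}=\mu\,\Dir_{\vect{a}^{2}}F(\vect{x})$. Since $\He(C)$ is nonsingular, $H'(\vect{b})\neq\vect{0}$, so this common form is not identically zero; therefore $P_{a^{2}}(C):\Dir_{\vect{a}^{2}}F(\vect{x})=0$ is an honest line, and it coincides with the tangent $T_{\iota(a)}(\He(C)):H'(\vect{b})\vect{x}=0$.

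I expect the only genuine obstacle to be the middle step — identifying $\operatorname{adj}(F''(\vect{b}))$ as $\mu\,\vect{a}\,\tr\vect{a}$ with $\mu\neq 0$, which is where the hypothesis that $\He(C)$ is nonsingular (via Proposition~\ref{prop:involution}(1)) is really used — and then carrying out the Jacobi-formula computation cleanly; the remainder is a citation of Proposition~\ref{prop:2-1}(2) together with the one-line homogeneity remark that a linear form equals the sum of its coefficients times the $x_{k}$.
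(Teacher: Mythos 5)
Your proof is correct, but it takes a genuinely different route from the paper's. The paper argues geometrically: it considers the pencil of polar conics $\{P_{b}(C)\}_{b\in P_{a^{2}}(C)}$, uses the symmetry relations \eqref{eq:symm} to see that $a$ is a base point, notes that $a$ is a base point of multiplicity two because it is the singular point of the member $P_{\iota(a)}(C)$, and observes that the degenerate members of the pencil correspond exactly to the points of $P_{a^{2}}(C)\cap\He(C)$; the collapsing of two of the three degenerate members into the single member $P_{\iota(a)}(C)$ then forces $P_{a^{2}}(C)$ to meet $\He(C)$ at $\iota(a)$ with multiplicity two, i.e.\ tangentially. You instead compare the defining linear forms of the two lines: from $F''(\vect{b})\vect{a}=\vect{0}$ (definition of $\iota$ via the proof of Proposition~\ref{prop:2-3}(2)) and the rank-two fact supplied by Proposition~\ref{prop:involution}(1) you deduce $\operatorname{adj}\bigl(F''(\vect{b})\bigr)=\mu\,\vect{a}\,\tr\vect{a}$ with $\mu\neq0$, and Jacobi's formula for the derivative of a determinant identifies the gradient of $\det F''$ at $\vect{b}$ with $\mu$ times the coefficient vector of the linear form $\Dir_{\vect{a}^{2}}F(\vect{x})$; Proposition~\ref{prop:2-1}(2) applied to the nonsingular cubic $\He(C)$ finishes the identification. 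Each step checks out (the kernel computation, the symmetry and nonvanishing of the adjugate of a rank-two symmetric matrix, the Euler-formula reading of the coefficients of $\Dir_{\vect{a}^{2}}F$), so the argument is complete. Your version is more elementary and self-contained: it avoids the somewhat delicate assertion about conic pencils with a multiplicity-two base point, and it shows explicitly that $P_{a^{2}}(C)$ is an honest line rather than all of $\P^{2}$. What it does not produce is the auxiliary geometric information the paper extracts along the way, namely that the degenerate polars $P_{d}(C)$ for $d\in P_{a^{2}}(C)\cap\He(C)$ form part of a conic pencil through $a$ — a fact the paper explicitly reuses in the proof of Proposition~\ref{prop:3-3}, so if you adopt your proof you would need to supply that observation separately there.
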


\begin{proof} 
$P_{a^{2}}(C)$ is a line given by $\Dir_{\vect{a}^{2}}F(\vect{x})=\tr\vect{a}F''(\vect{x})\vect{a}=0$ (see \eqref{eq:polar2}).  As $b\in P_{a^{2}}(C)$ moves on the line, we obtain a pencil of conics $\{P_{b}(C)\}_{b\in  P_{a^{2}}(C)}$.  It has four base points counting multiplicity.  Since for any $b\in P_{a^{2}}(C)$, $\vect{b}$  satisfies $\tr\vect{a}F''(\vect{b})\vect{a}=0$, and $P_{b}(C)$ is given by $\tr\vect{x}F''(\vect{b})\vect{x}=0$, we see that $a\in P_{b}(C)$ for any $b\in P_{a^{2}}(C)$.  This implies that $a$ is one of the base points. By the definition of $\iota(a)$, we have $F''(\iota(\vect{a}))\vect{a}=\vect{0}$.  In particular we have $\tr\vect{a}F''(\iota(\vect{a}))\vect{a}=0$ and $\iota(a)\in P_{b}(C)$.  Since $a$ is a double point of $P_{\iota(a)}(C)$ by Proposition~\ref{prop:2-3}(2), $a$ is a base point of the pencil with multiplicity~$2$.   For such a conic pencil having a base point with multiplicity~$2$, two of three degenerate conics in its members collapses into one multiply degenerated conic. In this case $P_{\iota(a)}(C)$ is such a multiply degenerate conic.  On the other hand, $P_{b}(C)$ is degenerate if and only if $b\in P_{a^{2}}(C) \cap \He(C)$.  Thus, $\iota(a)\in P_{a^{2}}(C)\cap \He(C)$ is an intersection point with multiplicity~$2$.  In other words, the line $P_{a^{2}}(C)$ is tangent to $\He(C)$ at $\iota(a)$. 
\end{proof}

\begin{prop}\label{prop:3-2}
Let $C$ be a nonsingular plane cubic curve whose Hessian $\He(C)$ is a nonsingular cubic curve, and let $a$ be a point in $C\cap \He(C)$.
\begin{enumerate}
\item
The inflection tangent line $T_{a}(C)$ is again tangent to $\He(C)$ at $\iota(a)$.  In particular, $T_{a}(C)$ coincides with the line $\overline{a\,\iota(a)}$.\item 
$a$ is also an inflection point of  $\He(C)$.
\end{enumerate}
\end{prop}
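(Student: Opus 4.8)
The plan is to deduce both parts directly from Proposition~\ref{prop:2-2}(1) and Proposition~\ref{prop:3-1}, together with the group law on the nonsingular cubic $\He(C)$.

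For part~(1), I would first invoke Proposition~\ref{prop:2-2}(1): since $a$ is a smooth point of $C$, the tangent line $T_{a}(C)$ is the second polar $P_{a^{2}}(C)$, i.e.\ the line $\Dir_{\vect{a}^{2}}F(\vect{x})=0$. Next, because $a$ also lies on $\He(C)$, Proposition~\ref{prop:3-1} says precisely that $P_{a^{2}}(C)$ is the tangent line to $\He(C)$ at $\iota(a)$. Combining the two identifications gives $T_{a}(C)=P_{a^{2}}(C)=T_{\iota(a)}(\He(C))$, so $T_{a}(C)$ is tangent to $\He(C)$ at $\iota(a)$. For the ``in particular'' clause, note that $a\in T_{a}(C)$: evaluating the equation $F'(\vect{a})\vect{x}=0$ of $T_{a}(C)$ at $\vect{x}=\vect{a}$ gives $F'(\vect{a})\vect{a}=3F(\vect{a})=0$ by Euler's formula (Lemma~\ref{lem:euler}) since $a\in C$. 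On the other hand $\iota(a)\in T_{a}(C)$ by the tangency just established, and $a\neq\iota(a)$ because $\iota$ has no fixed point (Proposition~\ref{prop:involution}(2)); since a line is determined by two distinct points, $T_{a}(C)=\overline{a\,\iota(a)}$.

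For part~(2), set $L=T_{a}(C)=\overline{a\,\iota(a)}$, which by part~(1) is tangent to $\He(C)$ at $\iota(a)$. The first step is to pin down the intersection divisor of $L$ with the cubic $\He(C)$: it has degree~$3$, it contains $2[\iota(a)]$ coming from the tangency and $[a]$ since $a\in L\cap\He(C)$, and because $a\neq\iota(a)$ these exhaust it, so the divisor equals $2[\iota(a)]+[a]$ exactly (in particular the contact at $\iota(a)$ is simple, and $a$ is genuinely the ``third'' intersection point). Now I pass to the group law on $\He(C)$ with the chosen inflection point $o$ as origin, so that three collinear points on $\He(C)$ sum to $o$; this yields $2\iota(a)+a=o$, that is, $a=-2\iota(a)$. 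Writing $\iota(a)=a+\tau$ with $2\tau=o$, we get $a=-2(a+\tau)=-2a$, hence $3a=o$. Finally, the tangent line to $\He(C)$ at $a$ meets $\He(C)$ in a divisor $2[a]+[r]$ with $2a+r=o$, so $r=-2a=a$ by the relation just obtained; thus this tangent has a triple contact at $a$, which is exactly the statement that $a$ is an inflection point of $\He(C)$.

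I do not expect a genuine obstacle: the mathematical content is essentially Proposition~\ref{prop:2-2}(1) combined with Proposition~\ref{prop:3-1}, and the only place requiring care is the bookkeeping in part~(2) --- namely, confirming that the residual intersection of $L$ with $\He(C)$ is really $a$ (this is where $a\neq\iota(a)$ and $\deg\He(C)=3$ enter) and then translating ``inflection point of $\He(C)$'' into the torsion condition $3a=o$ via the group law based at an inflection point.
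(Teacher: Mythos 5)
Your proposal is correct and follows essentially the same route as the paper: part (1) is exactly the combination of Proposition~\ref{prop:2-2}(1) with Proposition~\ref{prop:3-1}, and part (2) is the same group-law computation on $\He(C)$ (tangency at $\iota(a)$ gives $a+2\iota(a)=o$, then $\iota(a)=a+\tau$ with $2\tau=o$ yields $3a=o$). Your extra bookkeeping --- checking $a\neq\iota(a)$, the exact intersection divisor, and the translation of $3a=o$ back into the inflection condition --- only makes explicit what the paper leaves implicit.
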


\begin{proof}
(1) \
By Proposition~\ref{prop:2-2}(1), the tangent line $T_{a}(C)$ equals $P_{a^{2}}(C)$. By Proposition~\ref{prop:3-1}, $P_{a^{2}}(C)$ is tangent to $\He(C)$ at $\iota(a)$.

(2) \ 
Choose one of the inflection points of $\He(C)$ as the origin $o$, and identify $\He(C)$ with its Jacobian~$J_{H}$.   By~(1) the line $\overline{a\,\iota(a)}$ is tangent to $\He(C)$ at $\iota(a)$.  This translates to the equation $a+2\iota(a)=o$. On the other hand, we have $a+2\iota(a)=a+2(a+\tau)=3a+2\tau=3a$.  Thus, we have $3a=o$, which implies that $a$ is an inflection point of $\He(C)$.
\end{proof}

\begin{prop}\label{prop:3-3}
Let $C$ be a nonsingular plane cubic curve whose Hessian $\He(C)$ is a nonsingular cubic curve.  A line $l\in \dP^{2}$ belongs to $\Cay(C)$ if and only if it is an irreducible component of the first polar curve $P_{d}(C)$ for some $d\in \He(C)$.
\end{prop}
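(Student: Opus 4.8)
The plan is to prove both inclusions directly from the polar calculus of \S2 together with Proposition~\ref{prop:3-1}, the whole argument resting on one symmetry identity. Since $F$ is a cubic, the entries of $F''$ are linear, so the trilinear form $\tr\vect{u}\,F''(\vect{w})\vect{v}$ is totally symmetric in $\vect{u},\vect{v},\vect{w}$. On the other hand, the conic $P_{a}(C)$, which has symmetric matrix $F''(\vect{a})$ by \eqref{eq:polar2}, has $\iota(a)$ as its singular point, whence $F''(\vect{a})\,\iota(\vect{a})=\vect{0}$ by Lemma~\ref{lem:conic}(1), for every $a\in\He(C)$. Combining the two facts yields the identity that drives everything:
\[
\tr\vect{a}\,F''(\vect{b})\,\iota(\vect{a})=\tr\vect{b}\,F''(\vect{a})\,\iota(\vect{a})=0\qquad\text{for every }\vect{b}.
\]
In words, the first polar conic $P_{b}(C)$ of an \emph{arbitrary} point $b$ is apolar to every $\iota$-orbit $\{a,\iota(a)\}$ on $\He(C)$: its bilinear form pairs $\vect{a}$ with $\iota(\vect{a})$ to zero. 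I will also use that $\tr\vect{a}\,F''(\vect{b})\vect{a}=\Dir_{\vect{a}^{2}}F(\vect{b})$ is the equation (in $\vect{b}$) of the second polar $P_{a^{2}}(C)$, which for $a\in\He(C)$ is the tangent line $T_{\iota(a)}(\He(C))$ by Proposition~\ref{prop:3-1}; so $\tr\vect{a}\,F''(\vect{b})\vect{a}=0$ exactly when $b\in T_{\iota(a)}(\He(C))$.

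\emph{Forward direction.} Given $l=\overline{a\,\iota(a)}\in\Cay(C)$ with $a\in\He(C)$, I choose $b\in T_{a}(\He(C))\cap T_{\iota(a)}(\He(C))$. Then $b\in T_{\iota(a)}(\He(C))$ forces $\tr\vect{a}\,F''(\vect{b})\vect{a}=0$; and $b\in T_{a}(\He(C))=T_{\iota(\iota(a))}(\He(C))=P_{\iota(a)^{2}}(C)$ forces $\tr{\iota(\vect{a})}\,F''(\vect{b})\,\iota(\vect{a})=0$; while $\tr\vect{a}\,F''(\vect{b})\,\iota(\vect{a})=0$ by the identity. Expanding $\tr(s\vect{a}+t\,\iota(\vect{a}))\,F''(\vect{b})\,(s\vect{a}+t\,\iota(\vect{a}))$ now gives the zero polynomial, i.e.\ the conic $P_{b}(C)=\{\tr\vect{x}\,F''(\vect{b})\vect{x}=0\}$ contains the line $l$. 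As $C$ is nonsingular, $P_{b}(C)$ is not the whole plane, so it is degenerate; hence $\det F''(\vect{b})=0$, that is $b\in\He(C)$, and $l$ is a line component of $P_{b}(C)$.

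\emph{Converse.} Let $l$ be an irreducible component of $P_{d}(C)$ with $d\in\He(C)$. By Proposition~\ref{prop:involution}(1), $P_{d}(C)=l\cup l'$ is a pair of distinct lines meeting at the point $\iota(d)\in\He(C)$; I pick $a\in l\cap\He(C)$ with $a\neq\iota(d)$. From $a\in l\subset P_{d}(C)=\{\tr\vect{x}\,F''(\vect{d})\vect{x}=0\}$ I get $\tr\vect{a}\,F''(\vect{d})\vect{a}=0$, and since $l$ is a whole line in this conic, $\tr\vect{a}\,F''(\vect{d})\vect{y}=0$ for every $\vect{y}$ on $l$. As $F''(\vect{d})\vect{a}\neq\vect{0}$ (otherwise $a$ would be the singular point $\iota(d)$, again by Lemma~\ref{lem:conic}(1)), the covector $F''(\vect{d})\vect{a}$ cuts out $l$, i.e.\ $l=\{\tr\vect{x}\,F''(\vect{d})\vect{a}=0\}$. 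But $\tr{\iota(\vect{a})}\,F''(\vect{d})\vect{a}=0$ by the identity, so $\iota(a)\in l$; together with $a\in l$ and $a\neq\iota(a)$ this forces $l=\overline{a\,\iota(a)}$, a point of $\Cay(C)$.

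The conceptual content is all in the first paragraph---every first polar conic is apolar to every $\iota$-orbit---after which both directions are short. The one delicate point is the choice $a\neq\iota(d)$ in the converse, i.e.\ that no component of $P_{d}(C)$ is tangent to $\He(C)$ at $\iota(d)$. Here one observes that $P_{d^{2}}(C)$ is simultaneously $T_{\iota(d)}(\He(C))$ (Proposition~\ref{prop:3-1}) and the first polar of the conic $P_{d}(C)$ with respect to $d$ (both equal $\{\Dir_{\vect{x}}F(\vect{d})=0\}$, by Euler's formula, Lemma~\ref{lem:euler}), so a component of $P_{d}(C)$ can equal $T_{\iota(d)}(\He(C))$ only if $d\in P_{d}(C)$, i.e.\ $d\in C$; and for $d\in C\cap\He(C)$ one component of $P_{d}(C)$ is $T_{d}(C)=\overline{d\,\iota(d)}\in\Cay(C)$ (Proposition~\ref{prop:3-2}(1)), while the other is not tangent at $\iota(d)$. (Alternatively the converse follows from the forward direction by a degree count: $\Cay(C)$ is then contained in the curve traced out by the components of the $P_{d}(C)$, $d\in\He(C)$, which meets a generic pencil of lines in three points and so has degree~$3$, forcing it to coincide with the irreducible cubic $\Cay(C)$.)
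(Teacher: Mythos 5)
Your proof is correct, but it follows a genuinely different route from the paper's. The paper identifies $\He(C)$ with its Jacobian $J_{H}$ and argues group-theoretically: for $l=\overline{a\,\iota(a)}$ it sets $d=-2a$, uses Proposition~\ref{prop:3-1} together with the conic-pencil degeneration argument from its proof to see that one component of $P_{d}(C)$ passes through $a$ and through the third intersection point $-2a+\tau$ of $l$ with $\He(C)$, hence equals $l$; the converse is then read off from the four solutions of $-2x=d$, which differ by the $2$-torsion points $\tau,\tau'$, so that the two components of $P_{d}(C)$ are $\overline{a\,\iota(a)}$ and $\overline{a'\,\iota(a')}$ with $a'=a+\tau'$. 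You instead stay entirely inside the polarity calculus: total symmetry of the trilinear form $\tr\vect{u}F''(\vect{w})\vect{v}$ plus $F''(\vect{a})\,\iota(\vect{a})=\vect{0}$ (Lemma~\ref{lem:conic}(1)) gives $\tr\vect{a}F''(\vect{b})\,\iota(\vect{a})=0$ for every $b$, and both directions become linear algebra; in the forward direction your point $b\in T_{a}(\He(C))\cap T_{\iota(a)}(\He(C))$ is in fact the paper's $d=-2a$, but you establish $b\in\He(C)$ via degeneracy of $P_{b}(C)$ rather than via the group law. Your route buys a more elementary, self-contained argument that uses the involution only through fixed-point-freeness and Lemma~\ref{lem:conic}, not through its identification with translation by $\tau$; the paper's route buys the explicit group-theoretic bookkeeping, in particular the identification of the second component of $P_{d}(C)$ as $\overline{a'\,\iota(a')}$, which the paper immediately reuses in the proofs of Propositions~\ref{prop:3-4} and~\ref{prop:3-5}, so with your proof substituted that extra information would have to be recovered separately.

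Two compressed steps deserve one line each, though neither is a real gap. First, that $C$ nonsingular forces $P_{b}(C)\neq\P^{2}$: if $\Dir_{\vect{b}}F\equiv 0$, then in coordinates with $b=(0:0:1)$ the form $F$ involves only $x,y$, so $C$ is a cone over a binary cubic and hence singular. Second, in the delicate case of the converse, your assertion that the other component of $P_{d}(C)$ is not tangent to $\He(C)$ at $\iota(d)$: by your polar-line characterization such tangency would force that component to contain $d$, while $T_{d}(C)$ also contains $d$, so $d$ would be the vertex $\iota(d)$, contradicting Proposition~\ref{prop:involution}(2). Both follow at once from what you have already set up.
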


\begin{proof}
Let $l$ be the line $\overline{a\,\iota(a)}\in \Cay(C)$, where $a\in\He(C)$.  Then, by Propositon~\ref{prop:3-1} the second polar curves $P_{a^{2}}(C)$ and $P_{\iota(a)^{2}}(C)$ are tangent to $\He(C)$ at $\iota(a)$ and $a$ respectively.
Identifying $\He(C)$ with its Jacobian $J_{H}$ as before, $P_{a^{2}}(C)$, $P_{\iota(a)^{2}}(C)$ and $\He(C)$ converge at the point corresponding to $-2a$.  
Put $d=-2a$. Then, $P_{d}(C)$ is the union of two distinct lines intersecting at $d+\tau=-2a+\tau$, which is the third point of intersection between $l$ and $\He(C)$.  From the proof of Proposition~\ref{prop:3-1} we see that $P_{d}(C)$ is a member of a conic pencil passing through $a$.  Thus, one of the irreducible components of $P_{d}(C)$ passes through $a$ and $-2a+\tau$, and thus coincides with~$l$.

Conversely, consider $P_{d}(C)$ for any point~$d$.  It is the union of two distinct lines intersecting at $d+\tau$.  There are four points satisfying the equation $-2x=d$. These four solutions are written in the form $a$, $a+\tau$, $a+\tau'$, and $a+\tau'+\tau$, where $\tau'$ is another point of order~$2$ of $\He(C)$.  Now the argument of the first half of the proof shows that the components of $P_{d}(C)$ are $\overline{a\,\iota(a)}$ and $\overline{a'\,\iota(a')}$, where $a'=a+\tau'$.  This completes the proof.
\end{proof}

Let $l$ be a point of $\Cay(C)$.  Then, from the proof of Proposition~\ref{prop:3-3}, $l$ is a component of $P_{d}(C)$ for $d=-2a\in \He(C)$.  Let $\iota'$ be the map that associates to $l$ the other component of $P_{d}(C)$.

\begin{prop}\label{prop:3-4}
The map $\iota'$ is an involution of $\Cay(C)$ without fixed points. It corresponds to the translation by the nontrivial element $[\tau']$ of $J_{H}[2]/\<\tau\>$.
\end{prop}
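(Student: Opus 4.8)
The plan is to transport $\iota'$ through the isomorphism $\bar\eta\colon J_H/\langle\tau\rangle\to\Cay(C)$, $[a]\mapsto\overline{a\,\iota(a)}$, recorded just before Proposition~\ref{prop:3-1}, and to recognize the resulting self-map of $J_H/\langle\tau\rangle$ as translation by a fixed nonzero $2$-torsion class.

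First I would check that $\iota'$ is well defined as a self-map of $\Cay(C)$. Given $l\in\Cay(C)$, write $l=\overline{a\,\iota(a)}$ with $a\in\He(C)$; since $\iota(a+\tau)=a$, the only other such expression is $l=\overline{(a+\tau)\,\iota(a+\tau)}$, so $l$ determines the class $[a]\in J_H/\langle\tau\rangle$ and hence the point $d=-2a=-2(a+\tau)$. By Proposition~\ref{prop:3-3} and its proof, $d$ lies on $\He(C)$, $P_d(C)$ is the union of two \emph{distinct} lines, and these two lines are $\overline{a\,\iota(a)}=l$ and $\overline{a'\,\iota(a')}$ with $a'=a+\tau'$, where $\tau'$ is a nonzero $2$-torsion point of $J_H$ with $\tau'\notin\langle\tau\rangle$. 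Thus ``the other component of $P_d(C)$'' is unambiguous and again lies on $\Cay(C)$. Moreover the four solutions of $-2x=d$ are precisely $a,\ a+\tau,\ a+\tau',\ a+\tau+\tau'$, so the class $[\tau']\in J_H[2]/\langle\tau\rangle$ depends on neither the choice of representative $a$ nor on anything else.

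With this in place, $\iota'$ corresponds under $\bar\eta$ to the map $J_H/\langle\tau\rangle\to J_H/\langle\tau\rangle$ sending $[a]$ to $[a+\tau']=[a]+[\tau']$, that is, to translation by $[\tau']$. Since $J_H[2]\cong(\Z/2\Z)^2$ and $\langle\tau\rangle$ has order~$2$, the quotient $J_H[2]/\langle\tau\rangle$ has order~$2$ and $[\tau']$ is its nontrivial element; in particular $2[\tau']=0$, so $\iota'$ is an involution, and translation by the nonzero element $[\tau']$ on the torsor $J_H/\langle\tau\rangle$ has no fixed point (equivalently, $\iota'(l)=l$ would force $P_d(C)$ to be a double line, contradicting Proposition~\ref{prop:involution}(1) since $d\in\He(C)$).

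The step I expect to be the main obstacle is the well-definedness chain of the second paragraph: that the point $d$ attached to $l$ is independent of the chosen representative $a$, that $d$ genuinely lies on $\He(C)$ so that $P_d(C)$ really does split into two distinct lines, and that the second line has the stated form with $[\tau']$ canonical. Each link is, however, already present in the proofs of Propositions~\ref{prop:3-1} and~\ref{prop:3-3}, so once they are invoked the group-theoretic conclusion follows at once.
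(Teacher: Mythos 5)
Your proof is correct and follows essentially the same route as the paper: it reads off from the last part of the proof of Proposition~\ref{prop:3-3} that $\iota'$ corresponds to $[a]\mapsto[a+\tau']$ under the identification $\Cay(C)\cong J_{H}/\langle\tau\rangle$, and uses Proposition~\ref{prop:involution}(1) (distinctness of the two components of $P_{d}(C)$, equivalently translation by a nonzero element) to rule out fixed points. The extra well-definedness checks you spell out are a welcome elaboration of what the paper leaves implicit, not a different argument.
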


\begin{proof}
It is clear that $\iota'$ is an involution.  It has no fixed point by Proposition~\ref{prop:involution}(1).  From the last part of the proof of Proposition~\ref{prop:3-3}, we see that $\iota'(\overline{a\,\iota(a)})$ is obtained by adding $\tau'$ to $a$.  The second part follows from this.
\end{proof}

\begin{prop}\label{prop:3-5}
Let $a\in C\cap \He(C)$ be an inflection point of $C$ and $\He(C)$.  Let $T_{l}(\Cay(C))$ be the tangent line at $l=\overline{a\,\iota(a)}\in \Cay(C)$, and let $l'\in \Cay(C)$ be the third point of intersection between $T_{l}(\Cay(C))$ and $\Cay(C)$.  Then, $l'$ is an inflection point of $\Cay(C)$.
\end{prop}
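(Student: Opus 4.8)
The plan is to transport the whole statement into the group law of the Cayleyan curve. Since $\eta\colon\He(C)\to\Cay(C)$ is a finite morphism of degree~$2$ onto a plane cubic, $\Cay(C)$ has geometric genus~$1$ equal to its arithmetic genus, so it is a \emph{smooth} plane cubic. Fixing an inflection point $o$ of $\He(C)$ as origin identifies $\He(C)$ with $J_H$, and then the inflection points of $\He(C)$ are exactly $J_H[3]$, while $\eta$ identifies $\Cay(C)$ with the elliptic curve $J_C:=J_H/\langle\tau\rangle$, the origin of $\Cay(C)$ being $\eta(o)$. On the plane cubic $\Cay(C)$ there is a ``collinearity class'' $\delta_C\in J_C$ characterized by: three points of $\Cay(C)$ are collinear in $\dP^{2}$ if and only if their sum in $J_C$ equals $\delta_C$; equivalently, $Q\in\Cay(C)$ is an inflection point precisely when $3Q=\delta_C$. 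Everything reduces to pinning down $\delta_C$.

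First I would show that $\delta_C=[\tau']$, in particular that it is a $2$-torsion point. Take a generic point $d\in\He(C)$ and let $\tilde l$ be one of the four solutions of $-2x=d$ in $J_H$, so that $l:=\eta(\tilde l)$ corresponds to $[\tilde l]\in J_C$. By Proposition~\ref{prop:3-3} the degenerate polar $P_d(C)$ is the union of two distinct lines of $\Cay(C)$ meeting at their common node $p=\iota(d)=d+\tau$; one component is $l=[\tilde l]$ and, by Proposition~\ref{prop:3-4}, the other is $\iota'(l)=[\tilde l]+[\tau']$. The key observation is that the line $\overline{d\,\iota(d)}$ — which is the point $\eta(d)$ of $\Cay(C)$, corresponding to $[d]=[-2\tilde l]=-2[\tilde l]$ — also passes through $p=\iota(d)$, trivially. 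Thus $l$, $\iota'(l)$, $\eta(d)$ are three (generically distinct) lines of $\Cay(C)$ through the common point $p$, hence three collinear points of $\Cay(C)$ in $\dP^{2}$; adding them in $J_C$ gives
\[
\delta_C=[\tilde l]+\bigl([\tilde l]+[\tau']\bigr)+\bigl(-2[\tilde l]\bigr)=[\tau'],
\]
which is a $2$-torsion point of $J_C$ since $\tau'\in J_H[2]$.

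With $\delta_C$ in hand the conclusion is immediate. By hypothesis (or by Proposition~\ref{prop:3-2}\,(2)) $a$ is an inflection point of $\He(C)$, so $a\in J_H[3]$, and hence the point $l=\overline{a\,\iota(a)}$ of $\Cay(C)$, which corresponds to $[a]\in J_C$, satisfies $3l=0$ and $6l=0$. The tangent $T_l(\Cay(C))$ meets $\Cay(C)$ at $l$ with multiplicity~$2$ and at the residual point $l'$, so the collinearity rule gives $2l+l'=\delta_C$, i.e.\ $l'=\delta_C-2l$. Therefore, using $6l=0$ and $2\delta_C=0$,
\[
3l'=3\delta_C-6l=3\delta_C=\delta_C,
\]
so $l'$ is an inflection point of $\Cay(C)$. (Here $l$ itself is not an inflection point, since $3l=0\neq\delta_C$, so $l'\neq l$ is a genuine third point, as the statement envisages.)

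The main obstacle is the determination of $\delta_C$ in the second step, and within that the point to get right is the bookkeeping that produces an explicit collinear triple on $\Cay(C)$: besides the two line-components of a degenerate first polar $P_d(C)$, which meet at the node $p=\iota(d)$, the Cayleyan curve carries a third line through $p$, namely $\overline{d\,\iota(d)}=\eta(d)$, and it is this third member that forces $\delta_C\in J_C[2]$. Everything else is the one-line group-law computation above; one should still check the degenerate cases (that $\Cay(C)$ is genuinely smooth, and that the three lines in the triple are distinct for generic $d$), but these cause no real difficulty.
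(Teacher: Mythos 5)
Your proof is correct, and it reaches the conclusion by a genuinely different route from the paper's. The paper takes $a$ itself as the origin, so that $l$ becomes the origin of $\Cay(C)=J_{H}/\<\tau\>$; it then specializes the observation that the dual line of a point $b\in\He(C)$ meets $\Cay(C)$ in the three points $\eta(b),\eta(b'),\eta(b'')$ to the case $b=\iota(a)$, where two of the three coincide, concluding that this dual line is the tangent $T_{l}(\Cay(C))$ and that $l'$ is the second component of $P_{a}(C)$, i.e.\ $l'=\iota'(l)$ is a point of order~$2$; finally it converts $2l'=l$ into $l'*l'=l'$ by chord--tangent bookkeeping. You exploit the same underlying configuration --- the triple of points of $\Cay(C)$ on the dual line of a point of $\He(C)$, realized as the two components of $P_{d}(C)$ together with $\eta(d)$, all passing through $\iota(d)$ --- but at a \emph{generic} $d$, in order to pin down the collinearity class $\delta_{C}=[\tau']$ of the embedding $\Cay(C)\subset\dP^{2}$; after that the proposition is a two-line torsion computation ($3l=0$, $2\delta_{C}=0$, $l'=\delta_{C}-2l$), and as a by-product you recover Corollary~\ref{cor:prop3-5} (since $l'=\delta_{C}-2l=[a+\tau']=\iota'(l)$) together with the clean description of the flexes of $\Cay(C)$ as $\{x:3x=[\tau']\}$. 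What the paper's specialization buys is the explicit identification of the tangent line, which is reused later (e.g.\ in \S5 to locate the origin $O'$); what your version buys is uniformity and the avoidance of the tangency verification, at the cost of the generic-distinctness check and of invoking the standard divisor-class characterization of collinearity, which requires knowing that $\Cay(C)$ is a smooth cubic. Your one-line genus argument for that smoothness is a bit quick as stated --- a degree-$2$ image of a genus-$1$ curve could a priori be rational --- but smoothness follows from the identification $\Cay(C)\cong\He(C)/\<\iota\>$ already made in the paper (and which you use anyway via $J_{C}=J_{H}/\<\tau\>$), so this is only a presentational quibble, not a gap.
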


\begin{proof}
Identify $\He(C)$ with its Jacobian $J_{H}$ by choosing $a$ as the origin of the group structure.  Then $\Cay(C)$ is identified with $J_{H}/\<\tau\>$, and $l$ is the origin of $\Cay(C)$.  

We claim that the tangent line $T_{l}(\Cay(C))$ corresponds to a pencil of lines in $\P^{2}$ centered at $\iota(a)$.  In general, a line in $\dP^{2}$ corresponds to a pencil of lines in $\P^{2}$ centered at a point. 
If $b\in \He(C)$, then three lines among the pencil of lines centered at $b$ belong to $\Cay(C)$; these are $\overline{b\,\iota(b)}$, $\overline{b'\,\iota(b')}$ and $\overline{b''\,\iota(b'')}$, where $b'$ and $b''$ are points satisfying the equation $-2x+\tau=b$.  For $b=\iota(a)$, the line $\overline{b\,\iota(b)}=\overline{a\,\iota(a)}$ and one of the other two lines coincide since $-2a+\tau=\iota(a)$.  This shows that the line in $\dP^{2}$ corresponding to the pencil of lines in $\P^{2}$ centered at $\iota(a)$ is tangent to $\Cay(C)$.

The first polar curve $P_{a}(C)$ is the union of two lines passing through $\iota(a)$, and both lines are contained in $\Cay(C)$ by Proposition~\ref{prop:3-3}. The third point of intersection $l'\in T_{l}(\Cay(C))\cap \Cay(C)$ corresponds to the line other than $\overline{a\,\iota(a)}$.  This implies that $l'$ is a point of order~$2$, namely $2l'=l$.

For $l_{1}, l_{2}\in \Cay(C)$, let $l_{1}*l_{2}$ be the third point of intersection between the line $\overline{l_{1}\,l_{2}}$ and $\Cay(C)$.  With this notation, we have $l*l=l'$.  The condition $2l'=l$ is equivalent to $l*(l'*l')=l$.  Since $l_{1}*l_{2}=l_{3}$ implies $l_{2}=l_{1}*l_{3}$ in general, $l*(l'*l')=l$ implies $l'*l'=l*l$.  Thus, we have $l'*l'=l'$, which shows that $l'$ is an inflection point.
\end{proof}

\begin{cor}\label{cor:prop3-5}.
Let $a\in C\cap \He(C)$ be an inflection point of $C$ and $\He(C)$, and let $l$ be the line $\overline{a\,\iota(a)}\in \Cay(C)$.  Then, $\iota'(l)$ is an inflection point of $\Cay(C)$.
\end{cor}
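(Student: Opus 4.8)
The plan is to combine Proposition~\ref{prop:3-5} with Proposition~\ref{prop:3-4}, working entirely in the group $J_{H}/\<\tau\>$ with which $\Cay(C)$ is identified. First I would fix notation exactly as in the proof of Proposition~\ref{prop:3-5}: identify $\He(C)$ with $J_{H}$ by taking the inflection point $a$ as the origin, so that $l=\overline{a\,\iota(a)}$ is the origin of $\Cay(C)=J_{H}/\<\tau\>$. By that proposition, the third intersection point $l'$ of $T_{l}(\Cay(C))$ with $\Cay(C)$ is an inflection point of $\Cay(C)$, and from its proof we also know $2l'=l$, i.e. $l'$ is (the image of) a point of order dividing~$2$ in $J_{H}/\<\tau\>$.

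The key remaining step is to identify $l'$ with $\iota'(l)$. By Proposition~\ref{prop:3-4}, $\iota'$ is translation by the nontrivial element $[\tau']$ of $J_{H}[2]/\<\tau\>$; since $l$ is the origin, $\iota'(l)=[\tau']$, the unique nontrivial $2$-torsion point of $\Cay(C)$. On the other hand, $l'$ is a $2$-torsion point of $\Cay(C)$, and it is not the origin: indeed the proof of Proposition~\ref{prop:3-5} shows $l'$ corresponds to the component of the degenerate first polar $P_{a}(C)$ other than $\overline{a\,\iota(a)}=l$, hence $l'\neq l$. Therefore $l'$ must equal the unique nontrivial $2$-torsion point $[\tau']=\iota'(l)$. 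Combining this with the fact that $l'$ is an inflection point of $\Cay(C)$ gives the corollary.

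I expect the only subtlety to be making the last identification airtight — namely checking that $l'$ is genuinely nonzero in $J_{H}/\<\tau\>$ rather than merely a $2$-torsion representative that could collapse to the origin. This is already handled inside the proof of Proposition~\ref{prop:3-5}, where the two components of $P_{a}(C)$ are seen to be distinct lines (using that $\He(C)$ is nonsingular, via Proposition~\ref{prop:involution}(1)), so the argument is short: it is essentially the observation that $J_{H}/\<\tau\>$ has a unique point of exact order~$2$ and both $l'$ and $\iota'(l)$ are that point. No computation with the explicit equations is needed.
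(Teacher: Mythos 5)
There is a genuine gap at the step where you identify $l'$ with $\iota'(l)$. You argue that $l'$ is a nonzero $2$-torsion point of $\Cay(C)=J_{H}/\<\tau\>$ and that this curve has a unique nontrivial $2$-torsion point, namely $[\tau']$. That uniqueness claim is false: the quotient of an elliptic curve by a point of order~$2$ is again an elliptic curve, so $(J_{H}/\<\tau\>)[2]\cong(\Z/2\Z)^{2}$ has three nontrivial elements. What is true is that $J_{H}[2]/\<\tau\>$ has a unique nontrivial element, but this is only a proper subgroup of $(J_{H}/\<\tau\>)[2]$; the two remaining $2$-torsion points of $\Cay(C)$ are the images of the points $x\in J_{H}$ with $2x=\tau$. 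Knowing only that $2l'=l$ (with $l$ the origin) and $l'\neq l$ therefore leaves three candidates for $l'$, and the conclusion $l'=[\tau']=\iota'(l)$ does not follow from your counting argument.

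The fix is to use the geometric definition of $\iota'$ rather than group-theoretic uniqueness, and this is exactly what the paper does. Since $a$ is an inflection point of $\He(C)$, we have $-2a=a$ in $J_{H}$, so by the definition of $\iota'$ the point $\iota'(l)$ is the component of $P_{a}(C)$ other than $l=\overline{a\,\iota(a)}$; on the other hand, the proof of Proposition~\ref{prop:3-5} (which you already invoke to get $l'\neq l$) shows that $l'$ is precisely that other component of $P_{a}(C)$. Hence $l'=\iota'(l)$ directly, and Proposition~\ref{prop:3-5} then says this point is an inflection point of $\Cay(C)$. (Equivalently, the proof of Proposition~\ref{prop:3-3} identifies the other component as $\overline{a'\,\iota(a')}$ with $a'=a+\tau'$, so $l'=\eta(\tau')=[\tau']$; some such geometric input is indispensable, since the purely group-theoretic argument cannot distinguish $[\tau']$ from the other two $2$-torsion points of $\Cay(C)$.)
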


\begin{proof}
From the above proof, we see that the line $\overline{a\,\iota(a)}$ is a component of $P_{a}(C)$ and the point $l'\in T_{l}(\Cay(C))\cap \Cay(C)$ corresponds to the other component of $P_{a}(C)$.  This implies that $l'=\iota'(l)$, and thus $\iota'(l)$ is an inflection point of $\Cay(C)$.
\end{proof}

\section{Symplectically isomorphic family via Hessian}

In this section the base field $k$ is assumed to be a number field.
Let $E_{0}$ be an elliptic curve defined over~$k$.  To apply the classical theory developed in the previous sections, we choose a model of $E_{0}$ as a plane cubic curve such that the origin $O$ is an inflection point.  
With this choice, we have the property that three points $P$, $Q$ and $R$ are collinear if and only if $P+Q+R=O$.  In particular, the inflection points corresponds to  $3$-torsion points $E_{0}[3]$.  

Any line joining two inflection points $T$ and $T'$ intersects with $E_{0}$ at another inflection point $T''$.  The set $\{T,T',T''\}$ is a coset with respect to a subgroup of $E_{0}[3]$.  Since there are four subgroups of order three in $E_{0}[3]\cong (\Z/3\Z)^{2}$, there are twelve lines each of which contains three inflection points.  

Consider the pencil of cubic curves $E_{0}+ t \He(E_{0})$, or more precisely the pencil of cubic curves defined by the equation
\[
F(x,y,z) + t \det\bigl(F''(x,y,z)/2!\bigr) = 0,
\]
where $F(\vect{x})=0$ is the equation of $E_{0}$.  The nine base points of this linear system are the inflection points of $E_{0}$ (and also of $\He(E_{0})$ by Propositon~\ref{prop:3-2}.)  Blowing up at these nine base points simultaneously, we obtain an elliptic surface $\E_{t}\to \P^{1}$ defined over~$k$.  By an abuse of notation we use $\E_{t}$ to indicate the pencil of cubic curves and also this elliptic surface.

\begin{prop}\label{prop:4-1}
The elliptic surface $\E_{t}$ is a rational elliptic surface which has four singular fibers of type I${}_{3}$.  It is of type No.~68 in the Oguiso-Shioda classification table (\cite{Oguiso-Shioda}).  It is isomorphic over $\bar k$ to the Hesse pencil 
\[
x^{3}+y^{3}+z^{3}=3\lambda xyz.
\]
\end{prop}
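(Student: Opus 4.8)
The plan is to verify each assertion in Proposition~\ref{prop:4-1} in turn, using the classical structure of the Hesse pencil together with the polarity theory developed above. First I would establish that $\E_t$ is a rational elliptic surface. Since $\E_t$ is obtained by blowing up $\P^2$ at the nine base points of a pencil of cubics, it suffices to check that these nine points are distinct, equivalently that the generic member is smooth; the generic member of $E_0 + t\He(E_0)$ is smooth because $E_0$ itself is a smooth fibre (at $t=0$), and a pencil of cubics with a smooth member and nine distinct base points yields the blow-up $\mathrm{Bl}_9\P^2$, which is the rational elliptic surface. The base points here are exactly the nine inflection points $E_0[3]$, which are distinct because $E_0$ is an elliptic curve; hence $\E_t$ is rational with section, and the Shioda--Tate formula gives that the Mordell--Weil rank plus the sum of $(m_v-1)$ over the reducible fibres equals $8$.

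Next I would pin down the reducible fibres. The twelve lines through triples of inflection points of $E_0$ (the ``Hesse configuration'' lines) partition into four triangles, each triangle being a (singular) cubic in the plane that passes through all nine base points, hence each is a member of the pencil $\E_t$. A triangle of lines is a curve of arithmetic genus one with three nodes, so on the blow-up it becomes a fibre of Kodaira type $\mathrm{I}_3$. That already accounts for four $\mathrm{I}_3$ fibres, contributing $4\cdot(3-1)=8$ to the Shioda--Tate sum; therefore the Mordell--Weil rank is $0$ and there can be no further reducible fibres. The remaining singular fibres, by the Euler-number count $\chi=12$ for a rational elliptic surface, are then nodal cubics $\mathrm{I}_1$ (there should be none of those left once we subtract $4\times 3=12$, so in fact the only singular fibres are the four $\mathrm{I}_3$'s). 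This configuration — four $\mathrm{I}_3$ fibres, trivial Mordell--Weil group, torsion $(\Z/3\Z)^2$ — is precisely entry No.~68 in the Oguiso--Shioda table, which I would cite directly from \cite{Oguiso-Shioda}.

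Finally, for the identification with the Hesse pencil over $\bar k$: by Proposition~\ref{prop:3-2} the inflection points of $E_0$ coincide with those of $\He(E_0)$, so over $\bar k$ we may change coordinates to bring $E_0$ into Hesse normal form $x^3+y^3+z^3 = 3\mu xyz$, whose nine inflection points are the standard ones. A direct computation of the Hessian determinant of $x^3+y^3+z^3-3\mu xyz$ shows it is again a Hesse cubic $x^3+y^3+z^3 = 3\mu' xyz$ with $\mu'$ a rational function of $\mu$; hence the pencil $E_0 + t\He(E_0)$ is a pencil of Hesse cubics sharing the nine standard base points, which is exactly the Hesse pencil (reparametrised in $t$). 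Thus $\E_t$ is $\bar k$-isomorphic to the Hesse pencil.

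The main obstacle I anticipate is the bookkeeping that shows the four triangles are genuinely members of the pencil $\E_t$ and that they exhaust the reducible fibres: one must argue that a degree-one pencil of cubics through the nine inflection points contains each Hesse triangle (a dimension count in the five-dimensional space of plane cubics, or the observation that each triangle and $E_0$ and $\He(E_0)$ all vanish on the nine base points and any three members of a pencil are linearly dependent), and then invoke Shioda--Tate to preclude additional reducible fibres. Once the four $\mathrm{I}_3$ fibres are in hand, the classification label and the Hesse-pencil identification follow without further difficulty.
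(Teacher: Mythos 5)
Your argument is correct, and for the first two assertions it is essentially the paper's: rationality from the blow-up of $\P^{2}$ at the nine base points, the four Hesse triangles (one for each order-$3$ subgroup of $E_{0}[3]$) as members of the pencil giving four reducible fibres, and an Euler-number/Shioda--Tate count showing these are all the singular fibres, whence type No.~68 of Oguiso--Shioda. You are actually more careful than the paper on one point (the paper does not spell out why each triangle is a member of the pencil; your B\'ezout-type argument that cubics through the nine inflection points form exactly this pencil is the right justification, though note the space of plane cubics is $\P^{9}$, not five-dimensional) and less careful on another: the paper allows each triangle fibre to be of type I${}_{3}$ \emph{or} IV and uses the Euler number $12$ to exclude IV, whereas you assert outright that the triangle has three nodes, i.e.\ that the three lines are not concurrent; this needs the same Euler count (a IV fibre has Euler number $4$, overshooting $12$) or your final explicit computation. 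The genuine difference is the last assertion: the paper identifies $\E_{t}$ with the Hesse pencil over $\bar k$ by citing Beauville's classification of elliptic surfaces with four singular fibres, while you argue directly by putting $E_{0}$ in Hesse normal form over $\bar k$ and using covariance of the Hessian under linear changes of coordinates, so that the pencil becomes the pencil spanned by a Hesse cubic and its Hessian, i.e.\ the Hesse pencil. Your route is more elementary and self-contained (and as a by-product re-proves the fibre configuration explicitly); the paper's route via Beauville is shorter and emphasizes that the fibre configuration alone already characterizes the surface.
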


\begin{proof}
It is obvious that $\E_{t}$ is a rational surface, as it is obtained by blowing up $\P^{2}$.  
Let $G$ be a subgroup of order~$3$ of $E_{0}[3]$.  Then, for each coset of $G$ there is a line passing through three points contained in the coset.  These three lines form a singular fiber of type either I${}_{3}$ or IV.  There are four such fibers.  Counting the Euler numbers, all of these four fibers must be of type I${}_{3}$ and there are no other singular fibers. Such surface is classified as No.~68 in Oguiso-Shioda classification.  Beauville \cite{Beauville} shows that such an elliptic surface must be isomorphic to the Hesse pencil over $\bar k$.
\end{proof}

\begin{theorem}\label{thm:4-1}
Let $E_{0}$ be an elliptic curve defined over $k$ given by a homogeneous cubic equation $F(x,y,z)=0$ in $\P^{2}$ such that the origin $O$ is one of the inflection point.  Let $\E_{t}$ be the pencil of cubic curves defined by
\[
\E_{t}: F(x,y,z)+t\,\det\bigl(F''(x,y,z)/2!\bigr)=0.
\]
Then, the identity map $(x:y:z)\mapsto (x:y:z)$ gives a symplectic isomorphism $E_{0}[3]\to\E_{t}[3]$ for each $t$ such that $\E_{t}$ is an elliptic curve.  Any elliptic curve $E$ over~$k$ with a symplectic isomorphism $\phi:E_{0}[3]\to E[3]$ is a member of $\E_{t}$.
\end{theorem}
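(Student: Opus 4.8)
The plan is to split the statement into two assertions: (i) for each generic $t$ the identity map on $\P^2$ carries $E_0[3]$ to $\E_t[3]$ and is symplectic; (ii) every $E/k$ that is symplectically isomorphic to $E_0$ on $3$-torsion occurs as some $\E_t$. For (i), the key observation is that the nine base points of the pencil $\E_t$ are exactly the inflection points of $E_0$, which by our running hypothesis form the subgroup $E_0[3]$; and by Proposition~\ref{prop:3-2}(2) these same nine points are the inflection points of $\He(E_0)$. Since every member of the pencil passes through all nine base points, and since on a smooth plane cubic the inflection points are precisely the $3$-torsion (once one of them is taken as the origin), I would argue that for each $t$ with $\E_t$ smooth the nine base points constitute $\E_t[3]$. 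One must check that the origin $O$ of $E_0$, which is an inflection point of $E_0$, is still an inflection point of $\E_t$ (so that it is legitimate to use it as the origin of the group law on $\E_t$); this again follows because $O$ is a base point and inflection points of the pencil are the base points. Then the identity map sends the group $E_0[3]$ bijectively onto the group $\E_t[3]$, and it is a group isomorphism because collinearity is preserved: three base points are collinear for $E_0$ iff the same line meets $\E_t$ in those three points, so $P+Q+R=O$ on $E_0$ iff $P+Q+R=O$ on $\E_t$.

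To see that this isomorphism is symplectic rather than anti-symplectic, I would use the fact that the Weil pairing on $E[3]$ is determined, up to inverse, by the Galois action, and that the $\mathbf{F}_3$-space $E[3]$ with its pairing has an orientation: the isomorphism is symplectic iff it has determinant $1$ with respect to compatible bases. The cleanest way is a specialization/continuity argument: the elliptic surface $\E_t$ is a rational elliptic surface over $k$ (Proposition~\ref{prop:4-1}), the identity map gives an isomorphism of the constant group scheme $E_0[3]$ with the $3$-torsion group scheme $\E_t[3]$ over the open locus where $\E_t$ is smooth, and the ``symplectic versus anti-symplectic'' distinction is locally constant in $t$; evaluating at $t=0$, where $\E_0=E_0$ and the identity is literally the identity, settles it. Alternatively one checks the pairing directly on one explicit pair of $3$-torsion points in the Hesse model, using that $\He(E_0)$ itself is a member of the pencil (at $t=\infty$) and Proposition~\ref{prop:3-2}(1), which says the inflection tangent $T_a(E_0)$ equals the line $\overline{a\,\iota(a)}$.

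For part (ii), the argument is a dimension/moduli count. Fixing $E_0$ with its symplectic structure on $E_0[3]$, the curves $E/k$ admitting a symplectic isomorphism $E_0[3]\to E[3]$ are parametrized by the (twisted) modular curve $Y(3)$, which has genus $0$; concretely, by Rubin--Silverberg \cite{Rubin-Silverberg} this family is a twist of the Hesse pencil $x^3+y^3+z^3=3\lambda xyz$. By Proposition~\ref{prop:4-1} the surface $\E_t$ is exactly such a twist of the Hesse pencil, defined over $k$, and by part (i) its members carry the correct symplectic structure; hence the map $t\mapsto \E_t$ realizes a $k$-rational map from $\P^1_t$ to this genus-$0$ parameter curve which, by comparing $j$-invariants (equivalently the four I$_3$ fibers), is an isomorphism. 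Therefore any $E$ in the family is $\E_t$ for a unique $t\in\P^1(k)$.

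The main obstacle will be pinning down the symplectic (as opposed to anti-symplectic) nature without hand-computing a Weil pairing: one needs to be careful that ``type of the isomorphism'' really is constant across the connected smooth locus of the pencil and that the $t=0$ endpoint lies in that locus, or else to carry out one explicit pairing computation in the Hesse model. The identification in (ii) of $\{\E_t\}$ with the full symplectic family, rather than a proper subfamily, also requires a genuine (if short) argument that the parametrizing curve is genus $0$ and that $t\mapsto\E_t$ is nonconstant of degree $1$ onto it; this is where invoking Beauville \cite{Beauville} and Rubin--Silverberg \cite{Rubin-Silverberg}, already cited above, does the real work.
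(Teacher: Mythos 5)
Your proposal breaks down at the central step of part (i). You need the nine base points to be inflection points --- equivalently, with $O$ as origin, the full $3$-torsion --- of \emph{every} smooth member $\E_{t}$, and in particular you need $O$ to remain an inflection point of $\E_{t}$ so that the chord construction gives the group law you use. Your justification is circular: you say $O$ is an inflection of $\E_{t}$ ``because $O$ is a base point and inflection points of the pencil are the base points,'' but the claim that the base points are inflections of each member is exactly what has to be proved; merely passing through the nine points does not give it. This is where the paper does real work: by Proposition~\ref{prop:4-1} the associated elliptic surface is rational with four fibers of type I$_{3}$ and Mordell--Weil group $(\Z/3\Z)^{2}$, so the nine base-point sections are $3$-torsion sections and restrict on every smooth fiber to the nine points of $\E_{t}[3]$ (an alternative justification, recorded right after the theorem, is the computation that $\He(\E_{t})=\E_{t_{H}}$ is again a member of the pencil, so the base points lie in $\E_{t}\cap\He(\E_{t})$). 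A purely configurational argument is also possible (using that every line through two base points meets $\E_{t}$ in a third base point, one can show with the group law of $\E_{t}$ that the nine points form a coset of $\E_{t}[3]$ and then that the coset is trivial), but you neither invoke the Mordell--Weil input you cite elsewhere nor supply such an argument, so as written the step is a genuine gap.

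The rest of your plan is sound and close in spirit to the paper. Your way of pinning down symplecticity --- the Weil pairing of two fixed $3$-torsion sections is a $\mu_{3}$-valued, hence locally constant, function of $t$ on the connected smooth locus, and at $t=0$ the map is the identity on $E_{0}[3]$ --- is a legitimate (indeed more explicit) version of what the paper asserts from the Mordell--Weil description. For part (ii), your appeal to Rubin--Silverberg and Beauville to identify $\E_{t}$ with the twisted Hesse pencil, i.e.\ with the universal family over the twist of $Y_{3}$ carrying the given symplectic level-$3$ structure, is essentially the paper's own argument (``$\E_{t}$ is a twist of the Hesse pencil, hence universal over $\P^{1}$ minus the four singular fibers''); just be aware that what makes the identification work is the level structure produced in part (i), not a comparison of $j$-invariants alone, so part (ii) also leans on the step you left unproved.
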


\begin{proof}
Since the base points of the pencil are the sections of the associated elliptic surface, and the Mordell-Weil group of our elliptic surface is isomorphic to $(\Z/3\Z)^{2}$, the identity map $(x:y:z)\mapsto (x:y:z)$ restricted to the inflection points ($=$ base points) gives a symplectic isomorphism $E_{0}[3]\to\E_{t}[3]$.

Since $\E_{t}$ is a twist of the Hesse pencil, it is a universal curve if we viewed it as a curve over $\P^{1}$ minus four points at which the fibers are singular.  The last assertion follows from this immediately.
\end{proof}

Let us write down the explicit equations.  We assume that $E_{0}$ is given by the Weierstrass equation 
\[
E_{0}:y^{2}z = x^{3} + Axz^{2} + B z^{3}.
\]
If we choose another model, computations can be done in a similar way.

The Hessian of the curve $E_{0}$ is given by
\[
\He(E_{0}):
\left|\begin{array}{ccc}
-3x & 0 & -Az\\
 0 & z & y \\ 
-Az & y & -Ax-3Bz
\end{array}\right|
=3Ax^2z+9Bxz^2+3xy^2-A^2z^3=0.
\]
A simple calculation shows that $\He(E)$ is singular if and only if $A(4A^{3}+27B^{2})=0$.

Note that the Hessian of $\E_{t}$ is of the form $\E_{t_{H}}$, where 
\[
t_{H}=\frac{-27Bt^3+9At^2+1}{9t(3A^2t^2+9Bt-A)}.
\]
This implies that the nine base points are inflection points of each smooth member of $\E_{t}$.

\begin{theorem}
Let $E_{0}$ be an elliptic curve given by $y^{2}z=x^{3}+Axz^{2}+Bz^{3}$ with $A\neq0$.  Then, the nine base points of the pencil of cubic curves
\[
\E_{t}:(y^{2}z - x^{3} - Axz^{2} - B z^{3}) + t\,(3Ax^2z+9Bxz^2+3xy^2-A^2z^3)=0
\]
are inflection points of each member of the pencil that is nonsingular.  Thus, the identity map $(x:y:z)\mapsto (x:y:z)$ gives a symplectic isomorphism $E_{0}[3]\to\E_{t}[3]$ for each $t$ such that $\E_{t}$ is an elliptic curve.
Moreover, $\E_{t}$ is a universal family of elliptic curves $E$ over~$k$ with a symplectic isomorphism $\phi:E_{0}[3]\to E[3]$.  \qed
\end{theorem}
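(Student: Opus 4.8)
The plan is to read this theorem off Theorem~\ref{thm:4-1}, whose hypotheses are met here; the only points that need checking are (i) that the displayed pencil really is the pencil $F+t\det(F''/2!)=0$ attached to the Weierstrass model, and (ii) that the extra hypothesis $A\neq0$ is exactly what makes $\He(E_{0})$ a nonsingular cubic, so that the results of \S2--\S4 apply without qualification.

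For (i), put $F(x,y,z)=y^{2}z-x^{3}-Axz^{2}-Bz^{3}$ and differentiate twice. One finds $F''(\vect{x})=2M(\vect{x})$, where $M(\vect{x})$ is the $3\times3$ matrix appearing in the explicit computation of $\He(E_{0})$ above; hence $\det\bigl(F''(\vect{x})/2!\bigr)=\det M(\vect{x})=3Ax^{2}z+9Bxz^{2}+3xy^{2}-A^{2}z^{3}$, which is precisely the second cubic in the pencil of the statement. I would also record the elementary fact that the Weierstrass model carries its origin $O=(0:1:0)$ at an inflection point, the line $z=0$ meeting $E_{0}$ triply there; thus the running hypothesis of Theorem~\ref{thm:4-1} holds and $\E_{t}$ in the statement coincides with the pencil $\E_{t}:F+t\det(F''/2!)=0$ of that theorem.

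For (ii), I would expand $\det M(\vect{x})$ and compute its singular locus, obtaining that $\He(E_{0})$ is singular exactly when $A(4A^{3}+27B^{2})=0$. Since $E_{0}$ is an elliptic curve, $4A^{3}+27B^{2}\neq0$, so $A\neq0$ is equivalent to $\He(E_{0})$ being a nonsingular cubic --- the hypothesis under which Propositions~\ref{prop:3-2} and~\ref{prop:4-1} are proved. (When $A=0$ the Hessian degenerates into a line and a conic and the argument below genuinely breaks down.) Granting this, Proposition~\ref{prop:2-1}(4) identifies the nine base points $E_{0}\cap\He(E_{0})$ of the pencil with the inflection locus $E_{0}[3]$, and Proposition~\ref{prop:3-2} shows these are also inflection points of $\He(E_{0})$. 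To see they are inflection points of \emph{every} smooth member, I would invoke the observation recorded after Theorem~\ref{thm:4-1} that $\He(\E_{t})$ is again a member $\E_{t_{H}}$ of the pencil. Two distinct members of a pencil of plane cubics meet in exactly the nine base points; and a smooth cubic has only nine inflection points, so by Proposition~\ref{prop:2-1}(4) it cannot coincide with its own Hessian. Hence $\E_{t}\cap\He(\E_{t})$ is the set of nine base points, and by Proposition~\ref{prop:2-1}(4) again this set is the inflection locus of $\E_{t}$.

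The remaining assertions then follow from Theorem~\ref{thm:4-1}: by Proposition~\ref{prop:4-1} the associated rational elliptic surface is a twist of the Hesse pencil with Mordell--Weil group $(\Z/3\Z)^{2}$, its nine sections meet each smooth fibre $\E_{t}$ exactly in $\E_{t}[3]$, so the coordinate identity map restricts to an isomorphism $E_{0}[3]\to\E_{t}[3]$, necessarily symplectic since it extends to a compatible identification of the full $3$-torsion structures; and universality holds because a twist of the Hesse pencil is the universal family over the corresponding twist of $Y_{3}$. I expect the only genuine difficulty to be this last step --- upgrading ``every member admits a symplectic isomorphism'' to ``every elliptic curve over $k$ admitting such an isomorphism occurs as a member'' --- which rests on the moduli interpretation of the (twisted) Hesse pencil, as in Rubin--Silverberg \cite{Rubin-Silverberg}. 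Everything else is the one-line differentiation producing the explicit Hessian together with routine bookkeeping, apart from the minor care needed to exclude the finitely many degenerate values of $t$.
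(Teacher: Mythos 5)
Your proposal is correct and takes essentially the same route as the paper, which gives no separate proof of this theorem but treats it as an immediate consequence of Theorem~\ref{thm:4-1} together with the explicit computations preceding the statement (the Hessian of the Weierstrass model, the criterion that $\He(E_{0})$ is singular if and only if $A(4A^{3}+27B^{2})=0$, and the observation that $\He(\E_{t})=\E_{t_{H}}$ lies in the same pencil, whence the base points are inflection points of every smooth member). The extra details you supply --- the pencil-intersection argument and the check that $A\neq0$ is exactly the nonsingularity condition for $\He(E_{0})$ --- simply make explicit steps the paper leaves implicit.
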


\begin{remark}
The Weierstrass form of $\E_{t}$ is given by
\begin{equation}\label{eq:Weier-E}
Y^{2}=X^{3}+a(t)X+b(t),
\end{equation}
where 
\begin{multline*}
\begin{aligned}
a(t) &= -27(A^3+9B^2)\,t^4+54AB\,t^3-18A^2t^2-18Bt+A,
\\
b(t) &= -243B(A^3+6B^2)\,t^6+54A(2A^3+9B^2)\,t^5
\end{aligned}
\\
+135A^2B\,t^4+270B^2\,t^3-45AB\,t^2+4A^2\,t+B.
\end{multline*}
The family of Rubin-Silverberg \cite{Rubin-Silverberg} and our family are related as follows.  Let $t_{RS}$ be the parameter of Rubin-Silverberg family, then our $t$ is given by 
\[
t = \frac{6 A B\,t_{RS}}{27 B^2\,t_{RS} +(4A^3+27B^2)}.
\]
\end{remark}

\section{Anti-symplectically isomorphic family via Cayleyan}

As in \S4, let $E_{0}$ be an elliptic curve defined over~$k$ realized as a plane cubic curve such that the origin $O$ is an inflection point. Using the same origin $O$, identify $\He(E_{0})$ with its Jacobian.  
Let $\eta:\He(E_{0})\to \Cay(E_{0})$ be the map $P\mapsto \overline{P\,\iota(P)}$, and $\iota'$ the involution on $\Cay(E_{0})$.  Then, by Corollary~\ref{cor:prop3-5}, the point $\iota'(\eta(O))$ is an inflection point of $\Cay(E_{0})$.  We denote $\iota'(\eta(O))$ by $O'$ and choose it as the origin of $\Cay(E_{0})$.  By this identification, $\Cay(E_{0})[3]$ is the set of inflection points of $\Cay(E_{0})\subset \dP^{2}$.  

\begin{prop}\label{prop:5-1}
The map $\phi$ that associates to $P\in E_{0}[3]$ the point $\iota'(\eta(P))\in \Cay(C)[3]$ gives an anti-symplectic isomorphism $\phi:E_{0}[3]\to \Cay(C)[3]$.
\end{prop}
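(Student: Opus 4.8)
The plan is to exploit the group-theoretic description of $\Cay(E_0)$ as the quotient $\He(E_0)/\langle\iota\rangle$ that was set up in \S3, together with the explicit identifications of origins. First I would fix notation compatibly with \S3: identify $\He(E_0)$ with its Jacobian $J_H$ using the common inflection point $O$ (which lies on $E_0\cap\He(E_0)$ by Proposition~\ref{prop:3-2}), let $\tau\in J_H[2]$ correspond to $\iota$, and identify $\Cay(E_0)$ with $J_H/\langle\tau\rangle$ via $\eta$. By Corollary~\ref{cor:prop3-5}, $O'=\iota'(\eta(O))$ is an inflection point of $\Cay(E_0)$, so choosing $O'$ as origin makes $\Cay(E_0)[3]$ the set of inflection points of $\Cay(E_0)$, and the map $P\mapsto\iota'(\eta(P))$ becomes, after these identifications, a well-defined map $E_0[3]\to\Cay(E_0)[3]$.

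The key step is to show that, under these identifications, $\phi$ is an affine-linear (in fact group) homomorphism and to compute its effect on the Weil pairing. Recall from \S4 that the inflection points of $E_0$ are exactly $E_0[3]$, and a line through inflection points $T,T',T''$ of $E_0$ satisfies $T+T'+T''=O$. The crucial observation is that for $P\in E_0[3]$, Proposition~\ref{prop:3-2}(1) identifies the inflection tangent $T_P(E_0)$ with the line $\overline{P\,\iota(P)}$, i.e.\ with $\eta(P)\in\Cay(E_0)$; thus $\eta$ restricted to $E_0[3]$ sends $P$ to the point of $\Cay(E_0)$ represented by its inflection tangent. Then I would argue that three collinear inflection points $T,T',T''$ of $E_0$ have inflection tangents $T_T,T_{T'},T_{T''}$ that are again "collinear" as points of $\Cay(E_0)\subset(\P^2)^*$ — concretely, the three tangent lines are concurrent (they meet at the third inflection point on the line $\overline{TT'}$, using that $E_0$ and $\He(E_0)$ share inflection points and inflection tangents). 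Concurrency of three lines means the corresponding three points of the dual plane are collinear, hence $\eta(T)+\eta(T')+\eta(T'')$ is a fixed point of $\Cay(E_0)$; applying $\iota'$ (a translation by $[\tau']$, by Proposition~\ref{prop:3-4}) one sees $\phi(T)+\phi(T')+\phi(T'')$ is a constant independent of the line. Since $O+O+O=O$ hits the origin side, one checks this constant is $O'$, so $\phi$ is a group homomorphism $E_0[3]\to\Cay(E_0)[3]$; it is injective because $\eta$ is injective on the inflection points (distinct inflection tangents), hence an isomorphism of the two groups of order~$9$.

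Finally I would address the Galois-equivariance and the sign of the pairing. Galois-equivariance is automatic: $\He(E_0)$, $\Cay(E_0)$, the involutions $\iota,\iota'$, and the origins $O,O'$ are all defined over $k$ (or their formation commutes with $\Gal(\bar k/k)$), so $\phi$ commutes with the Galois action. For the pairing, the cleanest route is to compare $e_{\Cay(E_0),3}(\phi P,\phi Q)$ with $e_{E_0,3}(P,Q)$ using the geometric (Weil reciprocity / divisor) description of the mod~$3$ pairing on inflection points: for a plane cubic with origin an inflection point, $e_3(P,Q)$ is computed from the collinearity data of $P,Q,-P-Q$. Passing to the dual plane via $\eta$ reverses the roles of "points lie on a line" and "lines pass through a point" — this duality is exactly what introduces the inverse in the Weil pairing. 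I expect the main obstacle to be making this last point rigorous rather than heuristic: one must show the dualizing map $\eta$ (followed by $\iota'$) genuinely sends the symplectic form $e_{E_0,3}$ to $e_{\Cay(E_0),3}^{-1}$ and not to $e_{\Cay(E_0),3}$. A robust way to settle the sign without delicate pairing computations is to invoke the classification from Proposition~\ref{prop:4-1}: $\phi$ is either symplectic or anti-symplectic; if it were symplectic, then by Theorem~\ref{thm:4-1} $\Cay(E_0)$ would be a member of the Hesse pencil $\E_t$, whereas $\Cay(E_0)$ sits in the dual plane $(\P^2)^*$ with a distinguished fixed-point-free involution $\iota'$ and is the Hessian of the curve $F_0$ (as asserted in \S1 and to be verified by the explicit formulas) — comparing $j$-invariants, or comparing with the explicit family $\F_t$, rules out the symplectic case. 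Hence $\phi$ is anti-symplectic.
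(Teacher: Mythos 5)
Two steps in your proposal do not hold up. First, the geometric justification of the homomorphism property is false. You claim that the inflection tangents $T_{T},T_{T'},T_{T''}$ at three collinear inflection points $T,T',T''$ of $E_{0}$ are concurrent, meeting at the third inflection point of the line $\overline{TT'}$. An inflection tangent meets the cubic only at its point of tangency (the intersection multiplicity there is already $3$), so $T_{T}$ cannot pass through $T''$; and the three tangents are not concurrent at any other point either: on the Hesse cubic $x^{3}+y^{3}+z^{3}=3\lambda xyz$ the tangents at the three inflection points on $x=0$ are $\lambda x+y+z=0$, $\lambda\omega x+y+\omega^{2}z=0$, $\lambda\omega^{2}x+y+\omega z=0$, whose coefficient determinant is $3\lambda(\omega-\omega^{2})\neq0$. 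Group-theoretically, identifying $\Cay(E_{0})$ with $J_{H}/\langle\tau\rangle$ and taking $O'=[\tau']$ as origin, one has $\eta(T)+\eta(T')+\eta(T'')=[T+T'+T'']=[O]\neq[\tau']=O'$, so $\eta(T),\eta(T'),\eta(T'')$ are \emph{not} collinear in $\dP^{2}$; it is only their $\iota'$-translates $\phi(T),\phi(T'),\phi(T'')$ that are collinear. The homomorphism property of $\phi$ is true, but the clean route is the one implicit in the paper: the quotient map $J_{H}\to J_{H}/\langle\tau\rangle$ is a group homomorphism, and $\iota'$ is the translation carrying the quotient origin $[O]$ to the chosen origin $O'$, hence an isomorphism onto the $O'$-based group law.

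Second, and more seriously, the anti-symplectic sign --- the actual content of the proposition --- is never established. Your duality argument is, as you acknowledge, heuristic, and the proposed fallback is both insufficient and circular: being symplectic or anti-symplectic is a property of the particular isomorphism $\phi$, not of the curve $\Cay(E_{0})$, so observing that $\Cay(E_{0})$ lives in the dual plane, carries the involution $\iota'$, or is the Hessian of $F_{0}$ does not exclude a symplectic $\phi$ (the families $\E_{t}$ and $\F_{t}$ can even share members when the mod~$3$ image is small); moreover, the fact that $\F_{t}$ is the anti-symplectic family is Theorem~\ref{thm:5-1}, which is deduced from Proposition~\ref{prop:5-1}, so invoking it here is circular. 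The paper settles the sign in one line via Lemma~\ref{lem:Weil-paring}: $\eta$ is induced by the degree-$2$ isogeny $\phi_{0}:\He(E_{0})\to\He(E_{0})/\langle\tau\rangle$, and since $\hat\phi_{0}\circ\phi_{0}=[2]$ one gets $e_{3}(\phi_{0}P,\phi_{0}Q)=e_{3}(P,\hat\phi_{0}\phi_{0}Q)=e_{3}(P,Q)^{2}=e_{3}(P,Q)^{-1}$ because $2\equiv-1\pmod{3}$; composing with the identity $E_{0}[3]=\He(E_{0})[3]$ (symplectic, e.g.\ by Theorem~\ref{thm:4-1}, as $\He(E_{0})$ is a member of $\E_{t}$) and with the translation $\iota'$ (an isomorphism of elliptic curves, hence Weil-pairing preserving) gives the claim. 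This isogeny/dual-isogeny computation is the missing idea; without it, or an equally rigorous substitute, your proposal does not prove the proposition.
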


The proof based on the following lemma.

\begin{lemma}[{Silverman \cite[Prop.~8.3]{Silverman}}]\label{lem:Weil-paring}
Let $\phi:E_{1}\to E_{2}$ be an isogeny, and let $P\in E_{1}[m]$ and $Q\in E_{2}[m]$.  Then the Weil pairings satisfy
\[
e_{E_{1},m}(P,\hat\phi(Q))=e_{E_{2},m}(\phi(P),Q),
\]
where $\hat\phi:E_{2}\to E_{1}$ is the dual isogeny.\qed
\end{lemma}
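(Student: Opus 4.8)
The plan is to prove the identity directly from the divisor-theoretic construction of the Weil pairing, reducing the compatibility with $\hat\phi$ to a single linear equivalence of divisors relating $E_{1}$ and $E_{2}$. Recall the construction: for $Q\in E_{2}[m]$ choose a function $f_{Q}$ on $E_{2}$ with $\operatorname{div}(f_{Q})=m(Q)-m(O)$ (which exists by Abel's theorem, since $mQ=O$), and a function $g_{Q}$ with $f_{Q}\circ[m]=g_{Q}^{\,m}$, where $[m]$ denotes multiplication by~$m$. For $S\in E_{2}[m]$ and any $Y\in E_{2}$ one has $g_{Q}(Y+S)^{m}=f_{Q}([m]Y)=g_{Q}(Y)^{m}$, so the ratio $g_{Q}(Y+S)/g_{Q}(Y)$ is an $m$-th root of unity independent of~$Y$, and $e_{E_{2},m}(S,Q)$ is by definition this ratio. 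The same recipe on $E_{1}$ produces $f_{\hat\phi Q}$, $g_{\hat\phi Q}$, and $e_{E_{1},m}(\,\cdot\,,\hat\phi Q)$.

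First I would establish the key divisor identity on $E_{1}$, namely that
\[
\phi^{*}\bigl((Q)-(O)\bigr)=(\hat\phi Q)-(O)+\operatorname{div}(h)
\]
for some function $h$ on~$E_{1}$. To see this, choose $Q'\in E_{1}$ with $\phi(Q')=Q$; since we are in characteristic~$0$, $\phi$ is separable and $\phi^{*}(Q)=\sum_{R\in\ker\phi}(Q'+R)$, $\phi^{*}(O)=\sum_{R\in\ker\phi}(R)$. Summing this degree-zero divisor under the group law gives $(\deg\phi)\,Q'=[\deg\phi]Q'=\hat\phi(\phi(Q'))=\hat\phi(Q)$, using $\hat\phi\circ\phi=[\deg\phi]$. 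Since $(\hat\phi Q)-(O)$ has the same sum, the two degree-zero divisors are linearly equivalent, which is the asserted identity. I expect this step, together with the bookkeeping of constants below, to be the main obstacle.

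Next I would transport the identity to the $g$-functions. Since $f_{Q}\circ\phi$ and $f_{\hat\phi Q}\cdot h^{m}$ have the same divisor, they agree up to a multiplicative constant. Combining this with
\[
(g_{Q}\circ\phi)^{m}=\bigl(f_{Q}\circ[m]_{E_{2}}\bigr)\circ\phi=(f_{Q}\circ\phi)\circ[m]_{E_{1}},
\]
where I have used $[m]_{E_{2}}\circ\phi=\phi\circ[m]_{E_{1}}$, and extracting $m$-th roots yields
\[
g_{Q}\circ\phi=c\cdot g_{\hat\phi Q}\cdot\bigl(h\circ[m]_{E_{1}}\bigr)
\]
for some constant~$c$.

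Finally, evaluating $e_{E_{2},m}(\phi P,Q)=g_{Q}(Y+\phi P)/g_{Q}(Y)$ at $Y=\phi(X)$ and using $\phi(X)+\phi(P)=\phi(X+P)$ gives
\[
e_{E_{2},m}(\phi P,Q)=\frac{(g_{Q}\circ\phi)(X+P)}{(g_{Q}\circ\phi)(X)}
=\frac{g_{\hat\phi Q}(X+P)}{g_{\hat\phi Q}(X)}=e_{E_{1},m}(P,\hat\phi Q),
\]
since the constant $c$ cancels and the factor $h\circ[m]_{E_{1}}$ cancels because $[m]_{E_{1}}(X+P)=[m]_{E_{1}}X$ for $P\in E_{1}[m]$. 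The delicate point is to check that the constant ambiguity in $f_{Q}\circ\phi=c\,f_{\hat\phi Q}h^{m}$ and the root of unity introduced on extracting $g_{Q}\circ\phi$ are genuinely constant, so that they drop out of the final ratio; once the divisor identity is in hand the remaining manipulations of the defining ratio are formal.
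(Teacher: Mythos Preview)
The paper does not supply its own proof of this lemma: it is quoted from Silverman's textbook and closed with a \qed. Your argument is correct and is essentially the standard divisor-theoretic proof given in that reference; the key step, namely the linear equivalence $\phi^{*}\bigl((Q)-(O)\bigr)\sim(\hat\phi Q)-(O)$ obtained by summing the fibre and invoking $\hat\phi\circ\phi=[\deg\phi]$, is exactly the crux of Silverman's argument, and your subsequent cancellation of the auxiliary function $h\circ[m]$ and the constant $c$ in the ratio defining the pairing is handled cleanly.
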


\begin{proof}[Proof of Proposition~\ref{prop:5-1}]
Let $\phi_{0}$ be the isogeny $\He(E_{0})[3]\to (\He(E_{0})/\<\tau\>)[3]$ of degree~$2$. Then, we have $\hat\phi_{0}\circ\phi_{0}=[2]$.  Thus, it follows form the above lemma that for $P_{1},P_{2}\in E_{1}[3]$
\begin{align*}
e_{E_{2},3}(\phi_{0}(P_{1}),\phi_{0}(P_{2}))
&=e_{E_{1},3}(P_{1},\hat\phi_{0}(\phi_{0}(P_{2})))
=e_{E_{1},3}(P_{1},[2]P_{2})
\\
&=e_{E_{1},3}(P_{1},P_{2})^{2}
=e_{E_{1},3}(P_{1},P_{2})^{-1}.
\end{align*}
This shows that $\phi_{0}$ is an anti-symplectic isomorphism.  The map $\phi$ is the composition of the identity map $E_{0}[3]\to \He(E_{0})$, the quotient map $\He(E_{0})[3]\to (\He(E_{0})/\<\tau\>)[3]$, and the translation $\iota'$.  Thus, $\phi$ is also an anti-symplectic isomorphism.
\end{proof}

As in previous section, we assume that $E_{0}$ is given by the Weierstrass equation \(y^{2}z = x^{3} + Axz^{2} + B z^{3}\).  Recall that the Hessian is given by \(3Ax^2z+9Bxz^2+3xy^2-A^2z^3=0\).  Let $P=(x_{0}:y_{0}:z_{0})$ be a point of $\He(E_{0})$, and let $\iota(P)=(x_{1}:y_{1}:z_{1})\in\He(E_{0})$.  Then, we have
\[
\left(\begin{array}{ccc}
-3x_{0} & 0 & -Az_{0}\\
 0 & z_{0} & y_{0} \\ 
-Az_{0} & y_{0} & -Ax_{0}-3Bz_{0}
\end{array}\right)
\left(\begin{array}{c} x_{1}\\y_{1}\\z_{1}\end{array}\right)
=\left(\begin{array}{c} 0\\ 0 \\ 0\end{array}\right)
\]
Thus, we obtain $(x_{1}:y_{1}:z_{1})=(Az_{0}^2:3x_{0}y_{0}:-3x_{0}z_{0})$ except for $x_{0}=z_{0}=0$, and for $(x_{0}:y_{0}:z_{0})=(0:1:0)$, we obtain $(x_{1}:y_{1}:z_{1})=(1:0:0)$.
The equation of the line $\overline{P\,\iota(P})$ is given by 
\[
\left|\begin{array}{ccc}
x_{0} & \phantom{-}Az_{0}^2 & x\\
y_{0} & \phantom{-}3x_{0}y_{0} & y \\ 
z_{0} & -3x_{0}z_{0} & z
\end{array}\right|=0.
\]
Thus, we have 
\begin{multline*}
\phi:(x_{0}:y_{0}:z_{0})\mapsto (\xi_{0}:\eta_{0}:\zeta_{0})
=\bigl(6x_{0}y_{0}z_{0}:
-z_{0}(3x_{0}^{2}+Az_{0}^{2}):-y_{0}(3x_{0}^{2}-Az_{0}^{2})\bigr).
\end{multline*}
Eliminating $x_{0},y_{0},z_{0}$ using the equation of $\He(E_{0})$, we see that $\xi_{0},\eta_{0}$, and $\zeta_{0}$ satisfy the relation
$A\xi_{0}^3+3\xi_{0}\zeta_{0}^2+3(3B\xi_{0}-2A\zeta_{0})\eta_{0}^2 =0$.
Thus, the equation of $\Cay(E_{0})\subset \dP^{2}$ is given by
\[
\Cay(E_{0}):A\xi^3+3\xi\zeta^2+3(3B\xi-2A\zeta)\eta^2 =0.
\]
$O=(0:1:0)\in \He(E_{0})$ is mapped to $(0:0:1)\in\Cay(E_{0})$ by $\phi$. 
The tangent line of $\Cay(E_{0})$ at $(0:0:1)$ is $\xi=0$, and the third point of intersection is $(0:1:0)$.  By Proposition~\ref{prop:3-5}, $(0:1:0)$ is an inflection point.  The tangent line at $(0:1:0)$ is given by $3B\xi - 2A\zeta=0$.
The change of variables
\begin{equation}\label{eq:change-of-var}
\xi'=3B\xi-2A\zeta,\quad \eta'=2A\eta,\quad \zeta'=-\xi,
\end{equation}
change the equation of $\Cay(E_{0})$ to
\begin{equation}\label{eq:Cayleyan}
\Cay(E_{0}):
-3\xi'^2\zeta'-18B\xi'\zeta'^2+3\xi'\eta'^2-(4A^3+27B^2)\zeta'^3=0,
\end{equation}
and the inflection tangent line at $(0:1:0)$ becomes $\xi'=0$.
Comparing with the equation of $\He(E_{0})$, we notice that the above equation 
is the Hessian of the curve
\[
\delta_{E_{0}}\eta'^2\zeta'
=\xi'^3-\delta_{E_{0}}\xi'\zeta'^2-2B\delta_{E_{0}}\zeta'^3,
\]
where $\delta_{E_{0}}=4A^3+27B^2$.

\begin{theorem}\label{thm:5-1}
Let $E_{0}$ be an elliptic curve given by $y^{2}z = x^{3} + Axz^{2} + B z^{3}$ with $A\neq0$, 
and let $F_{0}$ be the elliptic curve defined by the equation
\[
F_{0}:\delta_{E_{0}}y^2z
=x^3-\delta_{E_{0}}xz^2-2B\delta_{E_{0}}z^3,
\]
where $\delta_{E_{0}}=4A^3+27B^2$.  Let $\F_{t}$ be the pencil of cubic curves given by
\begin{multline*}
\F_{t}: (\delta_{E_{0}}y^2z
- x^3 + \delta_{E_{0}}xz^2 + 2B\delta_{E_{0}}z^3)
+t\,(-3x^2z-18Bxz^2+3xy^2-\delta_{E_{0}}z^3)=0.
\end{multline*}
Then, the map 
\[
\phi:(x:y:z)\mapsto \bigl(-y(3Ax^2+9Bxz-A^2z^2):Az(3x^2+Az^2):3xyz\bigr)
\]
gives a anti-symplectic isomorphism $E_{0}[3]\to\F_{t}[3]$ for each $t$ such that $\F_{t}$ is an elliptic curve.  Any elliptic curve $F$ over~$k$ with a symplectic isomorphism $\phi:E_{0}[3]\to F[3]$ is a member of $\F_{t}$.
\end{theorem}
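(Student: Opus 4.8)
The plan is to recognize $\F_{t}$ as the Hessian pencil attached to the cubic $F_{0}$ and then to combine the universal property of such pencils, proved in Theorem~\ref{thm:4-1}, with the anti-symplectic isomorphism of Proposition~\ref{prop:5-1}. The crucial input is the identity $\Cay(E_{0})=\He(F_{0})$ already established in the computation preceding the statement: in the coordinates of \eqref{eq:change-of-var} the equation of $\Cay(E_{0})$ takes the form \eqref{eq:Cayleyan}, which is exactly the Hessian equation $\det\bigl(F_{0}''/2!\bigr)=0$ of $F_{0}$. Thus, in the dual plane, $\F_{t}=F_{0}+t\,\He(F_{0})$ is of exactly the type studied in \S4, with $F_{0}$ playing the role of $E_{0}$. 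First I would record the nondegeneracy facts: since $E_{0}$ is an elliptic curve with $A\neq 0$, we have $\delta_{E_{0}}=4A^{3}+27B^{2}\neq 0$, so $\He(E_{0})$ is nonsingular and $\Cay(E_{0})\cong\He(E_{0})/\langle\iota\rangle$ is a smooth genus-one curve by Proposition~\ref{prop:involution}; a short discriminant computation then shows that \eqref{eq:Cayleyan} and $F_{0}$ are nonsingular cubics when $A\neq 0$ (the quadratic-twisted Weierstrass form of $F_{0}$ is $y^{2}=x^{3}-\delta_{E_{0}}x-2B\delta_{E_{0}}$, whose discriminant vanishes only if $A=0$), and that $(0:1:0)$ is an inflection point of $F_{0}$ with inflection tangent $z=0$.

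Applying Theorem~\ref{thm:4-1} with $F_{0}$ in place of $E_{0}$ then yields at once that the nine base points of $\F_{t}$ are the inflection points of $F_{0}$ --- hence also of $\He(F_{0})=\Cay(E_{0})$, by Proposition~\ref{prop:3-2} --- that these nine points constitute $\F_{t}[3]$ for every $t$ with $\F_{t}$ smooth, that the identity map $(x:y:z)\mapsto(x:y:z)$ is a symplectic isomorphism $F_{0}[3]\to\F_{t}[3]$, and that $\F_{t}$ is the universal family of elliptic curves over $k$ carrying a symplectic isomorphism from $F_{0}[3]$. Here one must check that the origin $O'=\iota'(\eta(O))$ chosen for $\Cay(E_{0})$ in \S5 is the same inflection point as the origin $(0:1:0)$ of $F_{0}$; this is precisely the content of the tangent-line computation following \eqref{eq:change-of-var}, where the inflection tangent to $\Cay(E_{0})$ at $\iota'(\eta(O))$ turns into $\xi'=0$, the inflection tangent of $F_{0}$ at $(0:1:0)$. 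With this alignment, $\Cay(E_{0})[3]=F_{0}[3]$ is an equality of Galois modules respecting the Weil pairings.

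By Proposition~\ref{prop:5-1}, the map $P\mapsto\iota'(\eta(P))$ is an anti-symplectic isomorphism $\phi_{0}:E_{0}[3]\to\Cay(E_{0})[3]=F_{0}[3]$. Consequently, for each $t$ with $\F_{t}$ smooth, the composition of $\phi_{0}$ with the identity isomorphism $F_{0}[3]\to\F_{t}[3]$ is anti-symplectic --- an anti-symplectic isomorphism followed by a symplectic one. To identify this composition with the map in the statement, one expresses $\iota'\circ\eta$ in coordinates, using the formulas for $\iota$ and $\eta$ obtained in \S5 and the description of $\iota'$ in Proposition~\ref{prop:3-4}, and then applies the change of coordinates \eqref{eq:change-of-var} that carries $\Cay(E_{0})$ onto $\He(F_{0})$; after clearing a common scalar factor this produces the displayed cubic map $\phi$, giving the first assertion.

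Finally, if $F/k$ is any elliptic curve admitting an anti-symplectic isomorphism $\psi:E_{0}[3]\to F[3]$, then $\psi\circ\phi_{0}^{-1}:F_{0}[3]\to F[3]$ is symplectic, so by the universality half of Theorem~\ref{thm:4-1} applied to $F_{0}$, the curve $F$ occurs as a member of the pencil $\F_{t}=F_{0}+t\,\He(F_{0})$, which is the last assertion. The proof is thus mainly organizational and computational rather than conceptual. I expect the real labor to be the explicit coordinate computation of $\iota'\circ\eta$ followed by \eqref{eq:change-of-var}, where one must keep the various origins --- the inflection point $O$ of $E_{0}$, its image $\eta(O)$ on $\Cay(E_{0})$, the inflection point $\iota'(\eta(O))$, and the origin $(0:1:0)$ of $F_{0}$ --- consistently aligned; the identification $\He(F_{0})=\Cay(E_{0})$ and the nondegeneracy checks, though essential, are routine.
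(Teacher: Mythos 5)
Your outline reproduces the paper's own proof: recognize $\Cay(E_{0})=\He(F_{0})$ (which indeed follows from the computation around \eqref{eq:change-of-var}--\eqref{eq:Cayleyan}; one checks $\det\bigl(G''/2\bigr)=\delta_{E_{0}}^{2}\,(-3x^{2}z-18Bxz^{2}+3xy^{2}-\delta_{E_{0}}z^{3})$ for the defining polynomial $G$ of $F_{0}$, so the pencil $\F_{t}$ is the Hessian pencil of $F_{0}$ up to rescaling $t$), apply Theorem~\ref{thm:4-1} with $F_{0}$ in place of $E_{0}$, and compose the symplectic identification $F_{0}[3]\to\F_{t}[3]$ with the anti-symplectic map $\iota'\circ\eta$ of Proposition~\ref{prop:5-1}; your treatment of universality via $\psi\circ\phi_{0}^{-1}$ is exactly what is intended (the word ``symplectic'' in the last sentence of the statement is a slip for ``anti-symplectic''), and your nondegeneracy checks and the alignment of $O'=\iota'(\eta(O))$ with the point $(0:1:0)$ of $F_{0}$ are correct.

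The one step you postponed, however, does not come out as you claim, and it is precisely the point where the statement itself is defective. Applying the change of variables \eqref{eq:change-of-var} to the line map $\eta:(x_{0}:y_{0}:z_{0})\mapsto\bigl(6x_{0}y_{0}z_{0}:-z_{0}(3x_{0}^{2}+Az_{0}^{2}):-y_{0}(3x_{0}^{2}-Az_{0}^{2})\bigr)$ yields, up to the overall scalar $-2$, exactly the cubic map displayed in the theorem; that is, the displayed $\phi$ is $\eta$ alone, with the translation $\iota'$ omitted. But $\eta$ does not carry $E_{0}[3]$ into the base points of the pencil: identifying $\He(E_{0})$ with its Jacobian with origin $O$ and taking $O'=\iota'(\eta(O))$ as origin of $\Cay(E_{0})$, one has $\eta(P)-O'=\bigl(\eta(P)-\eta(O)\bigr)-[\tau']$, the sum of a $3$-torsion element and a nontrivial $2$-torsion element, which is never $3$-torsion. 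Concretely, resolving the indeterminacy of the displayed formula at $O=(0:1:0)$ along either $E_{0}$ or $\He(E_{0})$ gives the image $(1:0:0)$ in the coordinates of \eqref{eq:Cayleyan}, a point lying on $\Cay(E_{0})=\He(F_{0})$ but not on $F_{0}$, hence not a base point and not on $\F_{t}$ for general $t$; a numerical check at a finite $3$-torsion point of $y^{2}=x^{3}+x$ confirms the same. The map that works is the one you (and the paper's proof text) actually describe, namely $\iota'\circ\eta$, i.e.\ the displayed formula followed by translation by the $2$-torsion point $(1:0:0)$ of $\bigl(\Cay(E_{0}),O'\bigr)$. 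So your conceptual argument is sound and proves the theorem for the corrected map, but if you carry out the coordinate computation you planned, you will find that the formula in the statement must be amended rather than reproduced.
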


\begin{proof}
The map $\phi$ above is the map appeared in Proposition~\ref{prop:5-1}, namely the composition of the identity $E_{0}[3]\to\He(E_{0})[3]$, the quotient map $\phi:\He(E_{0})\to\He(E_{0})/\<\tau\>$ and the translation of $\Cay(E_{0})$ by a point of order~$2$.  This map sends inflection points of $\He(E_{0})$ to those of $\Cay(E_{0})$, and this map is anti-symplectically isomorphic by Propsition~\ref{prop:5-1}.

The curve $\F_{t}$ is an universal family of elliptic curves whose $3$-torsion subgroup is simplectically isomorphic to $\Cay(E_{0})[3]$. This means that $\F_{t}$ is an universal family of elliptic curves whose $3$-torsion subgroup is simplectically isomorphic to $E_{0}[3]$
\end{proof}

\begin{remark}
If we replace $t$ by  $2A\,t/(9B\,t-\delta_{E})$, then 
the Weierstrass form of $\F_{t}$ becomes particularly simple  Namely, the Weierstrass forms of the elliptic pencil $\F'_{t}:(9B\,t-2A)F_{0}-\delta_{E}\,t\Cay(E_{0})$ is given by
\begin{equation}\label{eq:Weier-F}
\F'_{t}:-\delta_{E}\,Y^{2}=X^{3}+a(t)X+b(t),\qquad 
\end{equation}
where 
\begin{multline*}
\begin{aligned}
a(t) &= \delta_{E}\bigl(27A^2\,t^4+108B\,t^3-18A\,t^2-1\bigr),
\\
b(t) &= 2\delta_{E}\bigl(-243B(A^3+6B^2)\,t^6+54A(2A^3+9B^2)\,t^5 \end{aligned}
\\
+135A^2B\,t^4 +270B^2\,t^3-45AB\,t^2+4A^2\,t+B\bigr).
\end{multline*}
If we denote by $j_{\E_{t}}(t)$ the $j$-invariant of the elliptic surface given by~\eqref{eq:Weier-E}, and by $j_{\F'_{t}}(t)$ that of~\eqref{eq:Weier-F}.  Then, we have 
\[
j_{\F'_{t}}(t)/1728=1728/\,j_{\E_{t}}(t).
\]
In particular, we have $j_{F_{0}}/1728=1728/j_{E_{0}}$.
\end{remark}
\begin{remark}
If $E_{0}$ is given by the Hesse pencil $x^{3}+y^{3}+z^{3}=3\lambda\,xyz$, then $\He(E_{0})$, $\Cay(E_{0})$ and $F_{0}$ are given by thte following (cf. Artebani and Dolgachev \cite{Artebani-Dolgachev}).
\[\renewcommand{\arraystretch}{1.6}
\begin{array}{ccl}
\He(E_{0})&:&x^{3}+y^{3}+z^{3}=\frac{4-\lambda^3}{\lambda^{2}}\,xyz,
\\
\Cay(E_{0})&:&\xi^{3}+\eta^{3}+\zeta^{3}
=\frac{\lambda^3+2}{\lambda}\,\xi\eta\zeta,
\\
F_{0} &:&\xi^{3}+\eta^{3}+\zeta^{3}=-\frac{6}{\lambda}\,\xi\eta\zeta.
\end{array}\]

\end{remark}

\section{Applications}

If we view our families $\E_{t}$ and $\mathcal{F}_{t}$ as elliptic surfaces, it is apparent that they are rational elliptic surfaces over~$k$.  As a consequence, we are able to apply Salgado's theorem \cite{Salgado} to our family. 

\begin{theorem}\label{thm:salgado}
Let $E_{0}$ be an elliptic curve over~$k$. 
\begin{enumerate}
\item 
There are infinitely many elliptic curves $E$ over $k$ such that $E[3]$ is symplectically isomorphic to $E_{0}[3]$ and $\rank E(k)\ge 2$. 
\item 
There are inifinitely many elliptic curves $F$ over~$k$ such that $F[3]$ is anti-symplectically isomorphic to $E_{0}[3]$ and $\rank F(k)\ge 2$. 
\qed
\end{enumerate}
\end{theorem}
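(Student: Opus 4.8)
The plan is to feed the explicit elliptic surfaces $\E\to\P^1$ and $\F\to\P^1$ into Salgado's theorem on the variation of Mordell--Weil rank in a rational elliptic surface, and then convert the rank-jumping fibres into elliptic curves with the prescribed mod~$3$ representation via Theorems~\ref{thm:4-1} and~\ref{thm:5-1}. Parts~(1) and~(2) are proved the same way, part~(1) with $\E_t$ and part~(2) with $\F_t$, so I describe part~(1).

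First I would record the surface-theoretic input. By Proposition~\ref{prop:4-1}, $\E\to\P^1$ is a rational elliptic surface defined over~$k$ --- it is $\P^2$ blown up along the $\Gal(\bar k/k)$-stable set of nine base points of the pencil --- and it is non-isotrivial, since over $\bar k$ it is the Hesse pencil (equivalently, $t\mapsto j(\E_t)$ is the non-constant rational function read off from~\eqref{eq:Weier-E}). Next I would invoke Salgado's theorem~\cite{Salgado}: for a non-isotrivial rational elliptic surface over a number field, the set of $t\in\P^1(k)$ for which $\rank\E_t(k)$ exceeds the generic rank is infinite. One point needs attention here: the generic Mordell--Weil rank of $\E$ over $k(t)$ is $0$ (the Mordell--Weil group of $\E$ is $(\Z/3\Z)^2$, as in the proof of Theorem~\ref{thm:4-1}, the four $\mathrm{I}_3$ fibres accounting for the whole trivial lattice), so the relevant version is the one applicable to rational elliptic surfaces carrying reducible fibres; granting it, we obtain an infinite set $S\subset\P^1(k)$ with $\rank\E_t(k)\ge 2$ for all $t\in S$, and after discarding the finitely many $t$ with $\E_t$ singular we may assume every $\E_t$, $t\in S$, is an elliptic curve over~$k$.

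For each $t\in S$, Theorem~\ref{thm:4-1} supplies a symplectic isomorphism $E_0[3]\to\E_t[3]$, so $E:=\E_t$ is an elliptic curve over~$k$ with $E[3]$ symplectically isomorphic to $E_0[3]$ and $\rank E(k)\ge 2$. To see that infinitely many \emph{non-isomorphic} curves occur, note that non-isotriviality makes $t\mapsto j(\E_t)$ a non-constant, hence finite-to-one, morphism $\P^1\to\P^1$, so it has infinite image on the infinite set $S$ and the curves $\E_t$ ($t\in S$) fall into infinitely many isomorphism classes. Part~(2) is identical with $\F_t$, Theorem~\ref{thm:5-1} and~\eqref{eq:Weier-F} in place of $\E_t$, Theorem~\ref{thm:4-1} and~\eqref{eq:Weier-E}, producing an anti-symplectic isomorphism $E_0[3]\to\F_t[3]$.

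The main obstacle is not internal to this argument: the arithmetic depth --- the rank jump itself --- is entirely contained in Salgado's theorem, which is used as a black box. What is left here is routine: checking that $\E$ and $\F$ meet that theorem's hypotheses (rational elliptic surface over~$k$, non-isotrivial, with the generic-rank~$0$ situation noted above), and the elementary $j$-invariant bookkeeping that promotes infinitely many parameters to infinitely many curves.
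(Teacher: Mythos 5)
Your proposal follows exactly the paper's route: the paper proves Theorem~\ref{thm:salgado} simply by observing that $\E_{t}$ and $\F_{t}$ are rational elliptic surfaces over~$k$ and invoking Salgado's theorem, combined with Theorems~\ref{thm:4-1} and~\ref{thm:5-1} for the (anti-)symplectic level structure. Your write-up adds the hypothesis checks (non-isotriviality, generic rank~$0$) and the $j$-invariant argument that infinitely many parameters give infinitely many non-isomorphic curves, which the paper leaves implicit, so it is a correct elaboration of the same argument.
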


Since $E_{0}[3]$ and $\Cay(E_{0})[3]$ are anti-symplectically isomorphic to each other, Frey and Kani \cite{Frey-Kani} predict that there exists a curve $C$ of genus~$2$ that admits two morphism $C\to E_{0}$ and $C\to \Cay(E_{0})$ of degree~$3$.  Indeed, we have the following.

\begin{prop}\label{prop:genus-2}
Let $E_{0}$ be an elliptic curve over~$k$ given by $y^{2}=x^{3}+Ax+B$, and assume $A\neq0$.  The Weierstrass form of $\Cay(E_{0})$ is given by
\[
\Cay(E_{0}):-3y^{2}=x^3-18Bx^2+3\delta_{E}x,
\]
where $\delta_{E_{0}}=4A^3+27B^2$. Then, the curve $C$ given by
\[
C:Y^2=-(3X^2+4A)(X^3+AX+B)
\]
is a curve of genus~$2$ admitting two morphisms $\psi_{1}:C\to E_{0}$ and $\psi_{2}:C\to \Cay(E_{0})$ of degree~$3$. 
The maps $\psi_{1}:C\to E_{0}$ and $\psi_{2}:C\to \Cay(E_{0})$ are given by
\begin{gather*}
\psi_{1}:(X,Y)\mapsto 
(x,y)=\left(-\frac{X^3+4B}{3X^2+4A},
 \frac{(X^3+4AX-8B)Y}{(3X^2+4A)^2}\right),
\\
\psi_{2}:(X,Y)\mapsto 
(x,y)=\left(-\frac{\delta_{E_{0}}}{3(X^3+AX+B)},
\frac{\delta_{E_{0}}(3X^2+A)Y}{9(X^3+AX+B)^2}\right). 
\end{gather*}
\end{prop}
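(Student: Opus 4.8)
The plan is to verify Proposition~\ref{prop:genus-2} by a combination of direct computation and a conceptual check that the two covers are consistent with the Frey--Kani picture. First I would confirm that the curve $C:Y^2=-(3X^2+4A)(X^3+Ax+B)$ has genus~$2$: the right-hand side is a quintic in $X$ (leading term $-3X^5$) with distinct roots for generic $A,B$ (the roots of $3X^2+4A$ and of $X^3+AX+B$ coincide only on a proper closed subset, since a common root would force a relation among $A,B$), so $C$ is a smooth hyperelliptic curve of genus~$2$. Next, I would establish that $\psi_1$ and $\psi_2$ as written are genuine morphisms of degree~$3$. For $\psi_1$, substitute the formulas for $(x,y)$ into $y^2-x^3-Ax-B$ and check that it vanishes identically modulo $Y^2+(3X^2+4A)(X^3+AX+B)$; the $x$-coordinate is a degree-$3$ rational function of $X$, so the induced map on $X$-lines has degree~$3$, and one checks the $y$-coordinate formula is the unique lift compatible with $y^2=x^3+Ax+B$. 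The computation for $\psi_2$ is entirely analogous, using the stated Weierstrass form $-3y^2=x^3-18Bx^2+3\delta_{E_0}x$ of $\Cay(E_0)$; one should also record the change of variables identifying this model with the $\Cay(E_0)$ of \S5 (it comes from \eqref{eq:change-of-var} followed by the reduction to Weierstrass form, specialized to $z=1$).

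The one genuinely new ingredient beyond mechanical verification is to explain \emph{why} this is the ``degenerate case'' alluded to in the introduction, i.e.\ why $\psi_2:C\to\Cay(E_0)$ is ramified at a single point with ramification index~$3$. For this I would examine the ramification divisor of the degree-$3$ map $X\mapsto x=-\delta_{E_0}/\bigl(3(X^3+AX+B)\bigr)$ on $\P^1$: its ramification points are where the derivative of $X^3+AX+B$ vanishes together with the behaviour at $X=\infty$. Since $x\to 0$ as $X\to\infty$ with $x\sim -\delta_{E_0}/(3X^3)$, the point $X=\infty$ is totally ramified, contributing a single point with ramification index~$3$; this is the source of the degeneracy. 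Pulling this back through the hyperelliptic cover $C\to\P^1_X$ and comparing with Riemann--Hurwitz for $\psi_2:C\to\Cay(E_0)$ (genus $2\to$ genus $1$, degree $3$, so the ramification divisor has degree $2$) pins down that the ramification is concentrated at the Weierstrass point of $C$ above $X=\infty$ with index~$3$. I would then note that the complementary cover $\psi_1$ is unramified in the corresponding fibre, consistent with the Frey--Kani construction producing $\mathrm{Jac}(C)\cong(E_0\times\Cay(E_0))/\Gamma_\psi$ for the anti-symplectic $\psi$ of Proposition~\ref{prop:5-1}.

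Finally, to tie the abstract existence statement of Frey--Kani to this explicit $C$, I would check that $\psi_1$ and $\psi_2$ together induce an isomorphism (up to isogeny of controlled degree) $\mathrm{Jac}(C)\to E_0\times\Cay(E_0)$ whose kernel is the graph of an anti-symplectic isomorphism $E_0[3]\to\Cay(E_0)[3]$ --- and that this isomorphism is the map $\phi$ of Proposition~\ref{prop:5-1}. In practice this amounts to: (i) computing $(\psi_1)_*+(\psi_2)_*$ on differentials to see the two covers are ``independent'', and (ii) matching $3$-torsion, for which it suffices to check that the pullbacks $\psi_1^*$ and $\psi_2^*$ of the respective $3$-torsion sit in $\mathrm{Jac}(C)[3]$ as complementary maximal isotropic-type subgroups. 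The main obstacle I anticipate is purely computational: carrying out the substitution verifying $\psi_2$ lands on $\Cay(E_0)$, because one must first reconcile the several coordinate changes in \S5 (the map $\phi$, the substitution \eqref{eq:change-of-var}, and the passage to Weierstrass form) to be sure the target model written in the proposition really is $\Cay(E_0)$; once the models are correctly aligned, the identities for $\psi_1,\psi_2$ are a finite check that a computer algebra system dispatches immediately.
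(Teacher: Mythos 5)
Your proposal is correct, and in fact the paper offers no proof of Proposition~\ref{prop:genus-2} at all: it is stated as the result of a direct verification, followed only by the remark that $\psi_{2}$ is ramified at the single place $X=\infty$ with index~$3$ (with a pointer to Shaska). Your plan --- check the genus, substitute the formulas into the two Weierstrass equations modulo $Y^{2}+(3X^{2}+4A)(X^{3}+AX+B)$, read off the degrees from the degree-$3$ maps on the $X$- and $x$-lines, and reconcile the model $-3y^{2}=x^{3}-18Bx^{2}+3\delta_{E_{0}}x$ with the equation \eqref{eq:Cayleyan} of $\Cay(E_{0})$ via \eqref{eq:change-of-var} --- is exactly the verification the paper leaves implicit, and your ramification analysis at $X=\infty$ reproduces the content of the paper's remark. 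One small sharpening: the genus-$2$ claim should not be stated for ``generic'' $A,B$; under the standing hypotheses the quintic always has distinct roots, since a common root of $3X^{2}+4A$ and $X^{3}+AX+B$ forces $x_{0}=3B/A$ and hence $4A^{3}+27B^{2}=0$, which is excluded because $E_{0}$ is an elliptic curve (and $A\neq0$ handles the roots of the quadratic factor). The material in your last paragraph on $\mathrm{Jac}(C)$ and the Frey--Kani gluing is not needed for the statement being proved and can be omitted or kept as motivation.
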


\begin{remark}
This is the degenerated case in the sense that $\psi_{2}$ is ramified at one place $X=\infty$ with ramification index 3. See Shaska \cite{Shaska} for more detail.
\end{remark}

\providecommand{\bysame}{\leavevmode\hbox to3em{\hrulefill}\thinspace}
\renewcommand{\MR}[1]{\relax}

\end{document}